\documentclass[12pt,a4paper,dvipdfmx]{article}
\usepackage{amsmath,amssymb,amsthm,enumerate,mathrsfs}
\usepackage[all]{xy}
\usepackage[numbers]{natbib}

\def\A{{\mathcal{A}}}
\def\B{{\mathcal{B}}}
\def\E{{\mathcal{E}}}
\def\I{{\mathcal{I}}}
\def\N{{\mathbb{N}}}
\def\P{{\mathcal{P}}}
\def\R{{\mathbb{R}}}
\def\S{{\mathcal{S}}}
\def\U{{\mathcal{U}}}
\def\V{{\mathcal{V}}}

\theoremstyle{plain}
\newtheorem{thm}{Theorem}[section]

\newtheorem{cor}{Corollary}[section]
\newtheorem{prop}{Proposition}[section]

\theoremstyle{definition}
\newtheorem{ax}{Axiom}[section]
\newtheorem{dfn}{Definition}[section]
\newtheorem{ex}{Example}[section]
\newtheorem{rem}{Remark}[section]

\numberwithin{equation}{section}
\allowdisplaybreaks

\title{Vector Representation of Preferences on $\sigma$-Algebras and Fair Division in Saturated Measure Spaces\thanks{I am grateful to M. Ali Khan, who insisted a more suitable usage on the terminology for axiomatization, and a referee for helpful comments. This research was supported by JSPS KAKENHI Grant Number JP18K01518 from the Ministry of Education, Culture, Sports, Science and Technology, Japan.}}
\date{\today}
\author{Nobusumi Sagara \\
{\small Faculty of Economics, Hosei University} \\[-4pt]
{\small 4342, Aihara, Machida, Tokyo 194--0298, Japan} \\[-4pt]
{\footnotesize e-mail: nsagara@hosei.ac.jp}} 

\begin{document}
\maketitle
\setcounter{page}{0}
\thispagestyle{empty}

\begin{abstract}
The purpose of this paper is twofold. First, we axiomatize preference relations on a $\sigma$-\hspace{0pt}algebra of a saturated measure space represented by a vector measure and furnish a utility representation in terms of a nonadditive measure satisfying the appropriate requirement of continuity and convexity. Second, we investigate the fair division problems in which each individual has nonadditive preferences on a $\sigma$-\hspace{0pt}algebra invoking our utility representation result. We show the existence of individually rational Pareto optimal partitions, Walrasian equilibria, core partitions, and Pareto optimal envy-free partitions. \\

\noindent
{\bfseries Key words:} preference relation on $\sigma$-algebra, vector measure, nonadditive measure, Lyapunov convexity theorem, saturated measure space, integral transformation, individual rationality, Pareto optimality, envy-freeness, core, Walrasian equilibrium.  \\

\noindent
{\bfseries MSC 2000:} Primary: 28B05, 28E10, 46B22,46G10; secondary: 91B16, 91B50. 
\end{abstract}
\clearpage

\section{Introduction}
Dividing a fixed amount of resources among members of a society to achieve efficiency and fairness is a central theme of social decision making. The first constructive solution to the problem of dividing a ``cake'' fairly was provided by \citet{st48}. Here, cake is simply a metaphor for a heterogeneously divisible commodity, and subsequently, the problem was formulated elegantly in \citet{ds61} as a partitioning problem in a measure space in which the preferences of each individual for pieces of a cake are represented by a nonatomic probability measure. Although the mathematical problem under investigation has attracted considerable attention in recent years, it has a long history. For the historical background, see \cite[Chapter 1]{ba05} and \cite[Chapter 2]{bt96}. Regarding the existence of various solutions for efficiency and fairness and the construction of the protocol/algorithm to obtain fair solutions, satisfactory results have been obtained under the additivity of preferences of each individual; see \cite{ba05,ds61,we85} for the nonconstructive existence of solutions and \cite{bt96,rw98} for the construction of the protocol/algorithm for fair solutions. 

Incidentally, representing preference relations on a $\sigma$-algebra by means of a probability measure means that the utility function inevitably exhibits ``constant marginal utility''. This implies that given an additional element $X$ in a $\sigma$-\hspace{0pt}algebra $\Sigma$, the utility function $\nu$ on $\Sigma$ given by a probability measure satisfies $\nu(A\cup X)-\nu(A)=\nu(X)$ for every $A\in \Sigma$ that is disjoint from $X$, which reveals that the marginal utility is independent of $A$. Obviously, this is a severe restriction on preference relations that is difficult to justify from an economics point of view. Thus, the reasonable conditions under which the preferences of each individual can have nonadditive representations should be addressed for fair division problems. For earlier attempts to axiomatize the preference relations on a finite measure space with their continuous representation by means of a nonadditive measure, see \cite{dm09,sv09}.   

The purpose of this paper is twofold. First, we axiomatize preference relations on a $\sigma$-\hspace{0pt}algebra of a finite measure space represented by a vector measure and furnish a utility representation in terms of a nonadditive measure that satisfies the appropriate requirement of continuity and convexity, which presents a different approach from \cite{dm09,sv09} with a numerical representation of preference relations by means of a nonatomic finite measure. The axioms we introduce here guarantee that the utility functions on a $\sigma$-\hspace{0pt}algebra are continuous quasiconcave transformations of a vector measure with values in a Banach space, which lays the axiomatic foundation for nonadditive utility functions exploited in \cite{hu08,hu11,hs13,sa06} for fair division problems. In particular, if the underlying measure space is ``saturated'' as formulated in \cite{fk02,hk84,ks09}, then such a utility representation is always viable in a separable Banach space along with the Lyapunov convexity theorem in infinite dimensions established in \cite{ks13,ks15,ks16}. This utility representation has a great advantage because the preference relations on a $\sigma$-\hspace{0pt}algebra have a continuous convex extension to the set of measurable functions with values in the unit interval. We characterize the saturation of measure spaces in terms of the continuous extensions of preference relations on $\sigma$-algebras.  

Second, we investigate the fair division problems in which each individual has nonadditive preferences on a $\sigma$-\hspace{0pt}algebra invoking our utility representation result. If the preferences of each individual are represented by a nonatomic probability measure, then the classical Lyapunov convexity theorem \cite{ly40} guarantees the compactness and convexity of the utility possibility set and thereby makes it possible to establish the existence of solutions with respect to efficiency and fairness; see \cite{ba05,ds61,we85}. In consideration of nonadditive utility functions on $\sigma$-\hspace{0pt}algebras, such favorable properties are no longer valid. For the clarification of the role of the nonatomicity hypothesis in the fair division problems with additive utility functions on $\sigma$-\hspace{0pt}algebras, see the survey article \cite{sa11}. 

To overcome the difficulty with nonadditive utility functions on $\sigma$-\hspace{0pt}algebras, as in \cite{ak95,da01,hs13}, we subsume partitions of an economy in which the preferences of each individual are represented by a continuous transformation of a vector measure into allocations of its extended economy with the commodity space of $L^\infty$ in which the preferences of each individual on a $\sigma$-algebra are continuously extended to the subset of functions in $L^\infty$ with values in the unit interval. We clarify the role of saturation of the underlying measure space to formulate the indifference relation for each individual between partitions and allocations, which provides another characterization of saturation. Under the saturation hypothesis, we show the existence of individually rational Pareto optimal partitions without any convexity assumption on the preferences of each individual and the existence of Walrasian equilibria, core partitions, and Pareto optimal envy-free partitions under the convexity assumption. 

The paper is organized as follows. After the brief introduction of saturated measure spaces and the Lyapunov convexity theorem in separable Banach spaces in Section 2, the axiomatization for the preference relations on a $\sigma$-\hspace{0pt}algebra is given in Section 3. The fair division problems are framed in Section 4 and the existence results on fair partitions are provided under the saturation hypothesis. Lastly, an open question is stated in Section 5 as a concluding remark.

\section{Preliminaries}
\subsection{Lyapunov Convexity Theorem in Saturated Measure Spaces}
Throughout this paper, we always assume that $(\Omega,\Sigma,\mu)$ is a finite measure space. A measure space $(\Omega,\Sigma,\mu)$ is said to be \textit{essentially countably generated} if its $\sigma$-\hspace{0pt}algebra can be generated by a countable number of subsets together with the null sets; $(\Omega,\Sigma,\mu)$ is said to be \textit{essentially uncountably generated} whenever it is not essentially countably generated. Let $\Sigma_X=\{ X\cap A\mid A\in \Sigma \}$ be the $\sigma$-\hspace{0pt}algebra restricted to $X\in \Sigma$. Denote by $L^1(X,\Sigma_X,\mu)$ the space of $\mu$-\hspace{0pt}integrable functions on the measurable space $(X,\Sigma_X)$ whose elements are restrictions of functions in $L^1(\Omega,\Sigma,\mu)$ to $X$. An equivalence relation $\equiv$ on $\Sigma$ is given by $A\equiv B \Longleftrightarrow \mu(A\triangle B)=0$, where $A\triangle B$ is the symmetric difference of $A$ and $B$ in $\Sigma$. The collection of equivalence classes is denoted by $\Sigma(\mu)=\Sigma/\equiv$ and its generic element $\widehat{A}$ is the equivalence class of $A\in \Sigma$. The metric $\rho$ on $\Sigma(\mu)$ is defined by $\rho(\widehat{A},\widehat{B})=\mu(A\triangle B)$. Then $(\Sigma(\mu),\rho)$ is a complete metric space (see \cite[Lemma 13.13]{ab06}) and $(\Sigma(\mu),\rho)$ is separable if and only if $L^1(\Omega,\Sigma,\mu)$ is separable (see \cite[Lemma 13.14]{ab06}). The \textit{density} of $(\Sigma(\mu),\rho)$ is the smallest cardinal number of the form $|\U|$, where $\U$ is a dense subset of $\Sigma(\mu)$. 

\begin{dfn}
A finite measure space $(\Omega,\Sigma,\mu)$ is \textit{saturated} if $L^1(X,\Sigma_X,\mu)$ is nonseparable for every $X\in \Sigma$ with $\mu(X)>0$. We say that a finite measure space has the \textit{saturation property} if it is saturated.
\end{dfn}

Saturation implies nonatomicity and several equivalent definitions for saturation are known; see \cite{fk02,fr12,hk84,ks09}. One of the simple characterizations of the saturation property is as follows. A finite measure space $(\Omega,\Sigma,\mu)$ is saturated if and only if $(X,\Sigma_X,\mu)$ is essentially uncountably generated for every $X\in \Sigma$ with $\mu(X)>0$. The saturation of finite measure spaces is also synonymous with the uncountability of the density of $\Sigma_X(\mu)$ for every $X\in \Sigma$ with $\mu(X)>0$; see \cite[331Y(e)]{fr12}. An inceptive notion of saturation already appeared in \cite{ka44,ma42}. 

Let $E$ be a Banach space. For a vector measure $m:\Sigma\to E$, a set $N\in \Sigma$ is said to be \textit{$m$-null} if $m(A\cap N)=0$ for every $A\in \Sigma$. A vector measure $m:\Sigma\to E$ is said to be \textit{$\mu$-\hspace{0pt}continuous} (or \textit{absolutely continuous} with respect to $\mu$) if every $\mu$-null set is $m$-null. Let $\S:=\{f\in L^\infty(\Omega,\Sigma,\mu)\mid 0\le f\le 1 \}$, which is a weakly$^*\!$ compact, convex subset of $L^\infty(\Omega,\Sigma,\mu)$, and define $m(\S):=\{ \int fdm\in E\mid f\in \S \}$. Since the integration operator $T_m:L^\infty(\Omega,\Sigma,\mu)\to E$ defined by $T_m(f):=\int fdm$ is continuous with respect to the weak$^*\!$ topology of $L^\infty(\Omega,\Sigma,\mu)$ and the weak topology of $E$ (see \cite[Lemma IX.1.3]{du77}), the set $m(\S)$ is weakly compact and convex in $E$. 

The saturation property and the Lyapunov convexity theorem in separable Banach spaces are a major apparatus in this paper. 

\begin{prop}[\citet{ks13}]
\label{lyap}
Let $E$ be a separable Banach space. If $(\Omega,\Sigma,\mu)$ is saturated, then for every $\mu$-continuous vector measure $m:\Sigma\to E$, its range $m(\Sigma)$ is weakly compact and convex with $m(\Sigma)=m(\S)$. Conversely, if every $\mu$-continuous vector measure $m:\Sigma\to E$ has the weakly compact convex range, then $(\Omega,\Sigma,\mu)$ is saturated whenever $E$ is infinite dimensional. 
\end{prop}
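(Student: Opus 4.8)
The plan is to prove the two implications separately, the direct one being the substantive part.

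\emph{Direct implication.} The preamble already records that $m(\S)$ is weakly compact and convex, and $m(\Sigma)\subseteq m(\S)$ is trivial since $m(A)=T_m(\chi_A)$ with $\chi_A\in\S$; hence it is enough to prove the bang-bang inclusion $m(\S)\subseteq m(\Sigma)$, which then forces $m(\Sigma)=m(\S)$. So fix $f\in\S$ and set $\mathcal{C}:=\{g\in\S:T_m(g)=T_m(f)\}$. This set is nonempty, convex, and weakly$^*$ compact, being the intersection of the weakly$^*$ compact set $\S$ with the weakly$^*$ closed set $T_m^{-1}(T_m(f))$ (using that $T_m$ is weak$^*$-to-weak continuous). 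By the Krein--Milman theorem $\mathcal{C}$ has an extreme point $g_0$, and I would show that $g_0$ is $\{0,1\}$-valued a.e.; then $g_0=\chi_A$ with $A:=\{g_0=1\}\in\Sigma$, and $m(A)=T_m(g_0)=T_m(f)=\int f\,dm$, as required.

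\emph{The key lemma.} If $g_0$ were not an indicator, then for some $\delta>0$ the set $B:=\{\delta\le g_0\le 1-\delta\}$ would have $\mu(B)>0$. The crucial point is that the restricted integration operator $T\colon L^\infty(B,\Sigma_B,\mu)\to E$, $T(h):=\int_B h\,dm$, has nontrivial kernel. Granting this, picking $h\in\ker T$ with $h\neq 0$, extending it by $0$ outside $B$ and rescaling so that $\|h\|_\infty\le\delta$, one gets $g_0\pm h\in\S$ (the values remain in $[0,1]$ by the choice of $B$) and $T_m(g_0\pm h)=T_m(g_0)\pm T(h)=T_m(f)$, so $g_0\pm h\in\mathcal{C}$ with $g_0=\tfrac12(g_0+h)+\tfrac12(g_0-h)$ and $g_0+h\neq g_0-h$, contradicting extremality. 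To see $\ker T\neq\{0\}$: for every $x^*\in E^*$ the scalar measure $x^*\circ m$ on $\Sigma_B$ is countably additive, hence of bounded variation, and $\mu$-continuous, so by the Radon--Nikodym theorem it has a density $\psi_{x^*}\in L^1(B,\Sigma_B,\mu)$ with $\langle x^*,T(h)\rangle=\int_B h\,\psi_{x^*}\,d\mu$; thus $\ker T$ is the annihilator in $L^\infty(B,\Sigma_B,\mu)=L^1(B,\Sigma_B,\mu)^*$ of $\{\psi_{x^*}:x^*\in E^*\}$. Were this annihilator $\{0\}$, the closed linear span of $\{\psi_{x^*}:x^*\in E^*\}$ would be all of $L^1(B,\Sigma_B,\mu)$; but $x^*\mapsto\psi_{x^*}$ is continuous from the closed unit ball of $E^*$ with the weak$^*$ topology --- which is compact and metrizable because $E$ is separable --- into $L^1(B,\Sigma_B,\mu)$ with the weak topology (as $x^*\mapsto\langle x^*,T(h)\rangle$ is weak$^*$ continuous for each $h$), so $\{\psi_{x^*}\}$ is weakly separable and, the weak and norm closures of a convex set coinciding, its closed linear span is norm-separable. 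This would make $L^1(B,\Sigma_B,\mu)$ separable, contradicting the saturation of $(\Omega,\Sigma,\mu)$.

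\emph{Converse.} Assume $E$ is infinite dimensional and, for contradiction, that $(\Omega,\Sigma,\mu)$ is not saturated; fix $B\in\Sigma$ with $\mu(B)>0$ such that $L^1(B,\Sigma_B,\mu)$ --- equivalently $L^2(B,\Sigma_B,\mu)$ --- is separable. The plan is to exhibit a $\mu$-continuous $m\colon\Sigma\to E$ whose range is not convex, by transporting the manifestly non-convex set $\{\chi_S:S\in\Sigma_B\}\subseteq L^2(B,\Sigma_B,\mu)$ into $E$ via an injective bounded operator. I would take a complete orthonormal system $(w_n)$ of $L^2(B,\Sigma_B,\mu)$, a normalized basic sequence $(x_n)$ in $E$ with biorthogonal bounded coefficient functionals $(x_n^*)$ --- such a sequence exists in every infinite-dimensional Banach space --- and scalars $\alpha_n>0$ with $\sum_n\alpha_n\|w_n\|_{L^1}<\infty$ and $\sum_n\alpha_n^2<\infty$. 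Then $g:=\sum_n\alpha_n w_n x_n$ is Bochner integrable on $B$, so $m(A):=\int_{A\cap B}g\,d\mu$ is a $\mu$-continuous vector measure, and $x_k^*(m(A))=\alpha_k\langle\chi_{A\cap B},w_k\rangle$; since $m(A)$ lies in the closed span of $(x_n)$, its basis expansion yields $m(A)=\sum_k\alpha_k\langle\chi_{A\cap B},w_k\rangle x_k$. Hence $m(\Sigma)=\Phi(\{\chi_S:S\in\Sigma_B\})$, where $\Phi\colon L^2(B,\Sigma_B,\mu)\to E$, $\Phi(\phi):=\sum_k\alpha_k\langle\phi,w_k\rangle x_k$, is bounded linear and injective by completeness of $(w_n)$. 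But $\{\chi_S:S\in\Sigma_B\}$ is not convex --- the midpoint of $\chi_S$ and $\chi_{S'}$ takes the value $\tfrac12$ on $S\triangle S'$ and so is not an indicator when $\mu(S\triangle S')>0$, while if $\Sigma_B(\mu)$ is trivial the two-point set $\{0,\chi_B\}$ is already non-convex --- so its injective linear image $m(\Sigma)$ is not convex, the desired contradiction.

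\emph{Main obstacle.} The heart of the argument is the nontriviality of $\ker T$ in the direct implication: this is exactly the place where saturation is converted into room to perturb the extreme point $g_0$. The other ingredients --- Krein--Milman, Radon--Nikodym, the existence of a basic sequence, and the elementary non-convexity of $\{\chi_S\}$ --- are routine.
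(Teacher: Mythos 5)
Your proof is correct, but note that the paper itself supplies no proof of Proposition \ref{lyap}: it is imported from \citet{ks13}, so there is no in-paper argument to compare against. What you have done is reconstruct, accurately, the standard arguments behind the cited result. For the direct implication, your route --- weak$^*$ compactness of $\mathcal{C}$, Krein--Milman, and perturbation of a non-indicator extreme point by a nonzero element of $\ker T$, whose existence you extract by playing the nonseparability of $L^1(B,\Sigma_B,\mu)$ (saturation) against the norm-separability of the closed span of the Radon--Nikodym densities $\psi_{x^*}$, using metrizability of the dual ball in the weak$^*$ topology for separable $E$ --- is the Lindenstrauss-type extreme-point argument; it is the same technique the paper deploys in the proof of Theorem \ref{thm4}, except that there the needed perturbation is obtained by invoking Proposition \ref{lyap} itself, whereas you manufacture it directly from nonseparability, which is exactly what makes your argument self-contained and is the genuine content of the sufficiency half. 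For the converse, your measure $m(A)=\int_{A\cap B}\sum_n\alpha_n w_n x_n\,d\mu$, which transports the non-convex family of indicator functions into $E$ through the injective operator $\Phi$ built on a basic sequence, is essentially the construction of \cite[Lemma 4]{po08} and \cite[Remark 1]{sy08} that the paper cites without proof in the proof of Theorem \ref{thm3}. Two cosmetic points, neither affecting correctness: the biorthogonal functionals $x_n^*$ are a priori defined only on the closed span of $(x_n)$ and should be extended to $E$ by Hahn--Banach before you apply them to $m(A)$; and the caveat about a trivial $\Sigma_B(\mu)$ is unnecessary, since $S=\emptyset$ and $S'=B$ already satisfy $\mu(S\triangle S')>0$, and injectivity of $\Phi$ shows the midpoint $\Phi\bigl(\tfrac12\chi_B\bigr)$ lies outside $m(\Sigma)$ in every case.
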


\begin{rem}
The significance of the saturation property lies in the fact that it is necessary and sufficient for the weak compactness and the convexity of the Bochner integral of a multifunction as well as the Lyapunov convexity theorem; see \cite{ks13,ks14,ks15,ks16,po08,sy08}. For the further generalization of Proposition \ref{lyap} to nonseparable locally convex spaces, see \cite{gp13,ks15,ks16,sa17}. Another intriguing characterization of saturation in terms of the existence of Nash equilibria in large games is found in \cite{ks09}. 
\end{rem}

\section{Vector Representation of Preference Relations on $\sigma$-\hspace{0pt}Algebras}
\subsection{Axioms for Preference Relations}
Let $(\Omega,\Sigma,\mu)$ be a finite measure space. A binary relation on the $\sigma$-algebra $\Sigma$ is a subset of the product space $\Sigma\times \Sigma$. The \textit{preference relation} $\succsim$ on $\Sigma$ is a complete transitive binary relation on $\Sigma$. We denote by $A\succsim B$ the relation $(A,B)\in{}\succsim$. The indifference and strict preference relations are defined respectively by $A\sim B\Longleftrightarrow A\succsim B$ and $B\succsim A$ and by $A\succ B \Longleftrightarrow A\succsim B$ and $A\not\sim B$. A real-valued function $\nu:\Sigma\to \R$ is called a \textit{utility function} representing $\succsim$ if for every $A,B\in \Sigma$: $\nu(A)\ge \nu(B) \Longleftrightarrow A\succsim B$. A set function $\nu:\Sigma\to \R$ is called a \textit{nonadditive measure} if $\nu(\emptyset)=0$. 

\begin{ax}[vector representation]
\label{ax1}
There exist a Banach space $E$ and a $\mu$-\hspace{0pt}continuous vector measure $m:\Sigma\to E$ such that $m(A)=m(B)$ implies $A\sim B$.  
\end{ax}
\noindent
The axiom enables one to define the preference relation $\mathscr{R}$ on $m(\Sigma)$ by 
\begin{equation}
\label{eq1}
\forall x,y\in m(\Sigma): x\,\mathscr{R}\,y \stackrel{\text{def}}{\Longleftrightarrow} A\succsim B \text{ with $x=m(A)$ and $y=m(B)$}.
\end{equation}
A preference relation on $\Sigma$ satisfying Axiom \ref{ax1} is said to admit a \textit{vector representation} in $E$. A vector representation for $\succsim$ is not unique because any scalar multiplication of $m$ is consistent with Axiom \ref{ax1}. In addition, the observation that $\succsim$ may admit a vector representation in another Banach space motivates one to introduce the following axiom. (In what follows, we assume that $\succsim$ admits a vector representation in a Banach space $E$ via a $\mu$-continuous vector measure $m:\Sigma\to E$.) 

\begin{ax}[commutativity]
\label{ax2}
If $\succsim$ admits another vector representation in a Banach space $F$ via a $\mu$-continuous vector measure $n:\Sigma\to F$, then there exists a unique continuous linear operator $T:E\to F$ such that $n=T\circ m$. 
\end{ax}
\noindent
By the symmetric treatment of $E$ and $F$, the axiom guarantees that there exists a unique continuous linear operator $U:F\to E$ such that $m=U\circ n$. Then 
$(U\circ T)m=m$ and $(T\circ U)n=n$. The situation is illustrated in the diagram below: 
$$
\xymatrix{
& \ E  \ar[dd]_{T} \ar@{<-}@<1.0ex>[dd]^{U}  \\
\Sigma \ar[ur]^m \ar[dr]_n   \\
& \ F  
}
$$
A preference relation on $\Sigma$ satisfying Axiom \ref{ax2} is said to admit \textit{commutativity} in $E$. Commutativity guarantees that the vector representation for $\succsim$ is unique up to the equivalence in the following sense. Let $\V(\Sigma,\mu)$ be the set of $\mu$-continuous vector measures on $\Sigma$ with values in any Banach space. A typical element in $\V(\Sigma,\mu)$ is denoted by $(m,E)$ to distinguish explicitly the range spaces of vector measures. Define the equivalence relation on $\V(\Sigma,\mu)$ by $(m,E)\simeq(n,F)\stackrel{\text{def}}\Longleftrightarrow$ there exists a unique pair of continuous linear operators $T:E\to F$ and $U:F\to E$ such that $n=T\circ m$ and $m=U\circ n$. As defined in \eqref{eq1}, the commutativity for $\succsim$ yields an induced preference relation $\mathscr{Q}$ on $n(\Sigma)$. Thus, for every $x,y\in m(\Sigma)$: $x\,\mathscr{R}\,y \Longleftrightarrow Tx\,\mathscr{Q}\,Ty$ and for every $v,w\in n(\Sigma)$: $v\,\mathscr{Q}\,w \Longleftrightarrow Uv\,\mathscr{R}\,Uw$.  

\begin{ax}[monotonicity]
\label{ax3}
For every $A,B\in \Sigma$ with $A\supset B$ and $m(A\setminus B)\ne 0$, we have $A\,\succ\,B$. 
\end{ax}
\noindent
The axiom is equivalent to the monotonicity of $\mathscr{R}$ on $m(\Sigma)$: for every $x,y\in m(\Sigma)$ with $x+y\in m(\Sigma)$ and $y\ne 0$, we have $x+y\,\mathscr{R}\,x$, but not $x\,\mathscr{R}\,x+y$. A preference relation on $\Sigma$ satisfying Axiom \ref{ax3} is said to admit a \textit{monotone representation} in $E$. 

\begin{ax}[continuity]
\label{ax4}
For every $A,B\in \Sigma$: $A^k\succsim B$ for each $k\in \N$ with $m(A^k)\to m(A)$ as $k\to \infty$ implies $A\succsim B$ and $A\succsim B^k$ for each $k\in \N$ with $m(B^k)\to m(B)$ implies $A\succsim B$. 
\end{ax}
\noindent
The axiom is equivalent to the continuity of $\mathscr{R}$ on $m(\Sigma)$: for every $x\in m(\Sigma)$, both the upper contour set $\{ y\in m(\Sigma)\mid y\,\mathscr{R}\,x \}$ and the lower contour set $\{ y\in m(\Sigma)\mid x\,\mathscr{R}\,y \}$ are closed in $m(\Sigma)$. A preference relation on $\Sigma$ satisfying Axiom \ref{ax4} is said to admit a \textit{continuous representation} in $E$. 

\begin{ax}[convexity]
\label{ax5}
Suppose that $m(\Sigma)$ is convex. For every $A\in \Sigma$, the set $m(\{ B\in \Sigma\mid B\,\succsim\,A \})$ is convex. 
\end{ax}
\noindent
The axiom is equivalent to the convexity of $\mathscr{R}$ on $m(\Sigma)$: for every $x\in m(\Sigma)$, the upper contour set $\{ y\in m(\Sigma)\mid y\,\mathscr{R}\,x \}$ is convex. A preference relation on $\Sigma$ satisfying Axiom \ref{ax5} is said to admit a \textit{convex representation} in $E$. Note that the convexity of $m(\Sigma)$ is not automatic even when $m$ is nonatomic because of the well-known failure of the Lyapunov convexity theorem in infinite dimensions (for counterexamples in which the Lyapunov convexity theorem fails in infinite dimensions, see \cite[Examples IX.1.1 and IX.1.2]{du77}).

\subsection{Utility Functions on $\Sigma$}
Axioms \ref{ax1}, \ref{ax2}, and \ref{ax4} guarantee the unique representation of $\succsim$ in terms of a nonadditive measure on $\Sigma$ given by a continuous transformation of a vector measure. If, moreover, Axiom \ref{ax5} is imposed, then the representation is a quasiconcave transformation of a vector measure.   

\begin{thm}
\label{thm1}
A preference relation $\succsim$ on $\Sigma$ admits a continuous (and convex) representation in a separable Banach space $E$ if and only if there exist a $\mu$-continuous vector measure $m:\Sigma\to E$ (with $m(\Sigma)$ being convex) and a continuous (and quasiconcave) function $\varphi:m(\Sigma)\to \R$ with $\varphi(0)=0$ such that:
\begin{equation}
\label{eq2}
\forall A,B\in \Sigma: A\succsim B \Longleftrightarrow \varphi(m(A))\ge \varphi(m(B)).
\end{equation}
If, furthermore, $\succsim$ admits commutativity, then the continuous (and convex) representation via $(m,E)\in \V(\Sigma,\mu)$ is unique up to the equivalence classes in $\V(\Sigma,\mu)/\simeq$. 
\end{thm}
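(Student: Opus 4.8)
The plan is to establish the two implications separately and then read off the uniqueness clause directly from Axiom~\ref{ax2}. For the ``only if'' direction I would start from a preference relation $\succsim$ admitting a continuous representation in a separable Banach space $E$ via a $\mu$-continuous $m:\Sigma\to E$. First I would note that Axiom~\ref{ax1} makes the relation $\mathscr{R}$ of \eqref{eq1} well defined on $m(\Sigma)$ (if $m(A)=m(A')$ and $m(B)=m(B')$, then $A\sim A'$ and $B\sim B'$, so $A\succsim B\iff A'\succsim B'$), that $\mathscr{R}$ inherits completeness and transitivity from $\succsim$, and that Axiom~\ref{ax4} makes $\mathscr{R}$ a continuous weak order on $m(\Sigma)$. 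Since $m(\Sigma)$ is a subspace of the separable metric space $E$, it is second countable, so I would invoke Debreu's utility representation theorem to obtain a continuous $\varphi_0:m(\Sigma)\to\R$ with $x\,\mathscr{R}\,y\iff\varphi_0(x)\ge\varphi_0(y)$, and then normalize by setting $\varphi:=\varphi_0-\varphi_0(0)$, which is legitimate because $0=m(\emptyset)\in m(\Sigma)$; unwinding \eqref{eq1} yields \eqref{eq2}. To treat the convex case I would add the observation that any utility function representing a convex preference on a convex set is automatically quasiconcave: for each $c\in\R$ the set $\{x\in m(\Sigma)\mid\varphi(x)\ge c\}$ is the union of the convex upper contour sets $\{x\in m(\Sigma)\mid x\,\mathscr{R}\,y\}$ over those $y\in m(\Sigma)$ with $\varphi(y)\ge c$, and this family is a chain under inclusion by transitivity of $\mathscr{R}$, hence its union is convex.

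For the ``if'' direction I would take $m$ and $\varphi$ as given, define $\succsim$ on $\Sigma$ by \eqref{eq2}, and verify the axioms. Completeness and transitivity are immediate since $\nu:=\varphi\circ m$ is real valued (and $\nu$ is a nonadditive measure because $\nu(\emptyset)=\varphi(0)=0$). Axiom~\ref{ax1} holds because $m$ is $\mu$-continuous by hypothesis and $m(A)=m(B)$ forces $\nu(A)=\nu(B)$, i.e. $A\sim B$. Axiom~\ref{ax4} follows from continuity of $\varphi$: if $A^k\succsim B$ with $m(A^k)\to m(A)$, then $\varphi(m(A))=\lim_k\varphi(m(A^k))\ge\varphi(m(B))$, so $A\succsim B$, and symmetrically for approximations of $B$; hence $\succsim$ admits a continuous representation in $E$. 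If in addition $\varphi$ is quasiconcave and $m(\Sigma)$ is convex, then for every $A\in\Sigma$ one has $m(\{B\in\Sigma\mid B\succsim A\})=\{x\in m(\Sigma)\mid\varphi(x)\ge\varphi(m(A))\}$, which is convex, so Axiom~\ref{ax5} holds and $\succsim$ admits a convex representation in $E$.

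For uniqueness, assuming $\succsim$ also admits commutativity, I would argue that if $(n,F)\in\V(\Sigma,\mu)$ gives another continuous (and convex) representation, then $n$ is in particular a $\mu$-continuous vector representation of $\succsim$ in $F$, so Axiom~\ref{ax2} supplies a unique continuous linear $T:E\to F$ with $n=T\circ m$ and, by the symmetric roles of $E$ and $F$, a unique continuous linear $U:F\to E$ with $m=U\circ n$. This is exactly the condition defining $(m,E)\simeq(n,F)$, so the vector-measure datum of the representation is a single well-defined element of $\V(\Sigma,\mu)/\simeq$.

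The main obstacle I anticipate is in the ``only if'' direction: making sure the hypotheses of Debreu's theorem are genuinely met (deducing second countability of $m(\Sigma)$ from separability of $E$) and recognizing that quasiconcavity comes for free once convexity of the preference is in hand, with the chain-of-convex-sets step doing the work precisely at the values $c$ not attained by $\varphi$. The remaining verifications amount to transcribing the equivalences between Axioms~\ref{ax1}, \ref{ax4}, \ref{ax5} and the topological and convexity properties of $\mathscr{R}$ already recorded immediately after those axioms.
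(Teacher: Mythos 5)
Your proposal is correct and follows essentially the same route as the paper: apply Debreu's representation theorem to the induced relation $\mathscr{R}$ on the second-countable set $m(\Sigma)$, normalize so that $\varphi(0)=0$, observe that the converse is a direct verification of Axioms \ref{ax1}, \ref{ax4}, \ref{ax5}, and read the uniqueness clause off Axiom \ref{ax2}. You merely make explicit some steps the paper leaves implicit, notably the chain-of-upper-contour-sets argument showing that any utility representing a convex preference on a convex set is quasiconcave.
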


\begin{proof}
Suppose that $\succsim$ admits a continuous (and convex) representation in a separable Banach space $E$. Then there exists a $\mu$-continuous vector measure $m:\Sigma\to E$ (with $m(\Sigma)$ being convex) such that \eqref{eq1} holds. Since the induced preference relation $\mathscr{R}$ on $m(\Sigma)$ is continuous (and convex) and $E$ is separable, by the celebrated theorem of \citet{de64}, there exists a continuous (and quasiconcave) function $\varphi:m(\Sigma)\to \R$ representing the preference relation $\mathscr{R}$ on $m(\Sigma)$, that is, $x\,\mathscr{R}\,y\Longleftrightarrow \varphi(x)\ge \varphi(y)$. Therefore, $\varphi\circ m$ is a utility function on $\Sigma$ representing the preference relation $\succsim$ on $\Sigma$. By replacing $\varphi(x)$ with $\varphi(x)-\varphi(0)$ if necessary, without loss of generality one may assume that $\varphi(0)=0$. The converse implication is obvious without assuming the separability of $E$.  
\end{proof}

Denote by $L^1_E(\Omega,\Sigma,\mu)$ the space of $E$-\hspace{0pt}valued Bochner integrable functions on $\Omega$. A Banach space $E$ has the \textit{Radon--Nikodym property} (\textit{RNP}) for a finite measure space $(\Omega,\Sigma,\mu)$ if for every $\mu$-\hspace{0pt}continuous vector measure $m:\Sigma\to E$ of bounded variation, there exists $u\in L^1_E(\Omega,\Sigma,\mu)$ such that $m(A)=\int_Aud\mu$ for every $A\in \Sigma$. Note that for every Banach space $E$, given a Bochner integrable function $u\in L^1_E(\Omega,\Sigma,\mu)$, the vector measure $m^u:\Sigma\to E$ defined by the indefinite integral $m^u(A):=\int_Aud\mu$ for $A\in \Sigma$ is of bounded variation; see \cite[Theorem II.2.4]{du77}. This observation yields the following result. 

\begin{cor}
A preference relation $\succsim$ on $\Sigma$ admits a continuous (and convex) representation in a separable Banach space $E$ with the RNP for $(\Omega,\Sigma,\mu)$ if and only if there exist a Bochner integrable function $u\in L^1_E(\Omega,\Sigma,\mu)$ (with $m^u(\Sigma)$ being convex) and a continuous (and quasiconcave) function $\varphi:m^u(\Sigma)\to \R$ with $\varphi(0)=0$ such that:
$$
\forall A,B\in \Sigma: A\succsim B \Longleftrightarrow \varphi\left( \int_Au(\omega)d\mu \right)\ge \varphi\left( \int_Bu(\omega)d\mu \right).
$$
\end{cor}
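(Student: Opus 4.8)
The plan is to obtain the Corollary directly from Theorem~\ref{thm1} by recognizing, under the RNP hypothesis, the $\mu$-continuous vector measure provided by the theorem as an indefinite Bochner integral. The ``if'' direction requires nothing beyond the converse implication already established in the proof of Theorem~\ref{thm1}: given $u\in L^1_E(\Omega,\Sigma,\mu)$ and a continuous (and quasiconcave) $\varphi:m^u(\Sigma)\to\R$ with $\varphi(0)=0$ for which the displayed equivalence holds, I would set $m:=m^u$, which by the paragraph preceding the statement is a $\mu$-continuous vector measure (indeed of bounded variation); then $\varphi\circ m^u$ is a utility function representing $\succsim$, so Axiom~\ref{ax1} holds, the induced relation $\mathscr{R}$ on $m^u(\Sigma)$ is continuous because $\varphi$ is, and it is convex when $\varphi$ is quasiconcave and $m^u(\Sigma)$ is convex. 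Thus $\succsim$ admits a continuous (and convex) representation in $E$, and the RNP is simply carried along (it is not actually used in this direction).

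For the ``only if'' direction I would apply Theorem~\ref{thm1} to a continuous (and convex) representation of $\succsim$ in the separable Banach space $E$, producing a $\mu$-continuous vector measure $m:\Sigma\to E$ (with $m(\Sigma)$ convex in the convex case) and a continuous (and quasiconcave) function $\varphi:m(\Sigma)\to\R$ with $\varphi(0)=0$ satisfying \eqref{eq2}. Taking this $m$ to be of bounded variation and invoking the defining property of the RNP of $E$ for $(\Omega,\Sigma,\mu)$, I obtain $u\in L^1_E(\Omega,\Sigma,\mu)$ with $m(A)=m^u(A)=\int_A u\,d\mu$ for every $A\in\Sigma$; in particular $m^u(\Sigma)=m(\Sigma)$, so replacing $m$ by $m^u$ in \eqref{eq2} yields the displayed equivalence with $u$ and $\varphi$ as required.

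The step I expect to be the crux is securing a representing vector measure of bounded variation, since the RNP is only applicable to vector measures of bounded variation, whereas a $\mu$-continuous vector measure into a separable space with the RNP need not have bounded variation (for instance $A\mapsto\bigl(\int_A r_n\,d\mu\bigr)_n$ into $\ell^2$, with $r_n$ the Rademacher functions on the unit interval). One thus has to be careful either to read ``continuous representation in $E$'' as carrying the additional requirement that the underlying vector measure be of bounded variation, or to argue separately that the representation delivered by Theorem~\ref{thm1} can be chosen of bounded variation; granting that point, everything else is immediate.
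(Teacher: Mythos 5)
Your plan is exactly the paper's own derivation: the paper offers no proof of the corollary beyond the sentence preceding it, namely that for any $u\in L^1_E(\Omega,\Sigma,\mu)$ the indefinite integral $m^u$ is a $\mu$-continuous vector measure of bounded variation, so that the ``if'' direction is the (obvious) converse part of Theorem \ref{thm1} applied to $m:=m^u$, and the ``only if'' direction is Theorem \ref{thm1} followed by an appeal to the RNP to extract a Bochner density for the representing measure. Both of your directions, including the observation that the RNP plays no role in the ``if'' part, coincide with this intended argument.

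The bounded-variation caveat you raise in the ``only if'' direction is genuine, and the paper does not address it: the RNP as defined applies only to $\mu$-continuous measures of bounded variation, whereas Axiom \ref{ax1} and Theorem \ref{thm1} deliver only $\mu$-continuity, and your example $A\mapsto\bigl(\int_A r_n\,d\mu\bigr)_n\in\ell^2$ is a correct witness (on the dyadic intervals of length $2^{-k}$ one gets $\|m(A)\|=2^{-k}\sqrt{k}$, so the variation exceeds $\sqrt{k}$ for every $k$, while $\ell^2$ is separable and has the RNP). The paper tacitly applies the RNP to the measure produced by Theorem \ref{thm1}, i.e., it implicitly assumes the vector representation is of bounded variation --- automatic in the finite-dimensional specification $E_i=\R^{l_i}$ invoked later, but not in general --- and nothing in the paper shows that the representation can always be chosen of bounded variation. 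So your proof is as complete as the paper's own, and identifying that step as the crux is a point in your favor rather than a defect of your argument.
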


\begin{ex}
Let $(\Omega,\Sigma,\mu)$ be a nonatomic finite measure space and $f\in L^1(\Omega,\Sigma,\mu)$ be a nonnegative integrable function. If $\succsim$ is such that for every $A,B\in \Sigma$: $A\succsim B \stackrel{\text{def}}{\Longleftrightarrow} \int_Afd\mu\ge \int_Bfd\mu$, then $\succsim$ admits a vector representation in the real field $\R$ via the $\mu$-continuous nonatomic finite measure $\nu$ defined by $\nu(A):=\int_Afd\mu$ and satisfies Axioms \ref{ax1}, \ref{ax3}, \ref{ax4}, and \ref{ax5}. Conversely, if $\succsim$ admits a vector representation in $\R$ via the $\mu$-continuous $\sigma$-finite measure $\nu$ and satisfies Axioms \ref{ax1}, \ref{ax3}, \ref{ax4}, and \ref{ax5}, then by Theorem \ref{thm1}, there exists a continuous quasiconcave function $\varphi:\nu(\Sigma)\to \R$ with $\varphi(0)=0$ such that for every $A,B\in \Sigma$: $A\succsim B \Longleftrightarrow \varphi(\int_Afd\mu)\ge \varphi(\int_Bfd\mu)$, where $f\in L^1(\Omega,\Sigma,\mu)$ is a Radon--Nikodym derivative of $\nu$. In this specific case, $\succsim$ admits a continuous convex representation with a quasiconcave integral transformation on $\Sigma$.
\end{ex}

\begin{rem}
Note that the proof of Theorem \ref{thm1} demonstrates that if $(\Omega,\Sigma,\mu)$ is a nonatomic finite measure space, then $\succsim$ admits a continuous (and convex) representation in $\R^l$ if and only if there exist a $\mu$-continuous vector measure $m:\Sigma\to \R^l$ with its range $m(\Sigma)$ being compact and convex and a continuous (and quasiconcave) function $\varphi:m(\Sigma)\to \R$ with $\varphi(0)=0$ satisfying \eqref{eq2}. This is an immediate consequence of the classical Lyapunov convexity theorem in view of the nonatomicity of $m$. 
\end{rem}

\subsection{Continuous Extensions to $\S$}
Denote by $\chi_A$ the characteristic function of $A\in \Sigma$. Then $\chi_A\in \S$ for every $A\in \Sigma$. A preference relation ${\succsim}^{\diamond}$ on $\S$ is a complete transitive binary relation on $\S$. A preference relation ${\succsim}^{\diamond}$ is said to be \textit{continuous} if for every $f\in \S$ both the upper contour set $\{ g\in \S\mid g\,{\succsim}^{\diamond}\,f \}$ and the lower contour set $\{ g\in \S\mid f\,{\succsim}^{\diamond}\,g \}$ are weakly$^*\!$ closed in $\S$; ${\succsim}^{\diamond}$ is said to be \textit{convex} if for every $f\in \S$ the upper contour set $\{ g\in \S\mid g\,{\succsim}^{\diamond}\,f \}$ is convex; ${\succsim}^{\diamond}$ is said to be \textit{monotone} if $f\ge g$ with $f\ne g$ imply $f\,\succ\,g$; ${\succsim}^{\diamond}$ is called an \textit{extension} of the preference relation $\succsim$ on $\Sigma$ if for every $A,B\in \Sigma$: $\chi_A\,{\succsim}^{\diamond}\,\chi_B \Longleftrightarrow A\succsim B$. 

If $\succsim$ is represented by a utility function of the form $\varphi\circ m$ on $\Sigma$, where $m$ is a $\mu$-\hspace{0pt}continuous vector measure with values in a Banach space $E$ (with $m(\Sigma)$ being convex) and $\varphi$ is a continuous (and quasiconcave) function on $m(\Sigma)$ with a continuous (and quasiconcave) extension $\varphi^\diamond$ to $m(\S)$, then the continuous (and convex) extension $\succsim^\diamond$ of $\succsim$ to $\S$ is given by: 
\begin{equation}
\label{eq3}
\forall f,g\in \S: f\,{\succsim}^{\diamond}g \stackrel{\text{def}}\Longleftrightarrow \varphi^\diamond\left( \int_\Omega f(\omega)dm \right)\ge \varphi^\diamond\left( \int_\Omega g(\omega)dm \right).
\end{equation}
The continuous (and convex) extension $\succsim^\diamond$ defined in this way is called a \textit{$($quasiconcave$)$ integral transformation} on $\S$. Then the utility function $\nu^\diamond:\S\to \R$ defined by $\nu^\diamond(f):=\varphi^\diamond(\int fdm)$ is weakly$^*\!$ continuous (and quasiconcave) on $S$ in view of the fact that the integration operator $T_m$ is continuous with respect to the weak$^*\!$ topology of $L^\infty(\Omega,\Sigma,\mu)$ and the weak topology of $E$, as noted above. 

\begin{thm}
\label{thm2}
Let $(\Omega,\Sigma,\mu)$ be a saturated finite measure space. Then every preference relation $\succsim$ on $\Sigma$ with a continuous representation in a separable Banach space has a continuous extension $\succsim^\diamond$ to $\S$ with an integral transformation such that for every $f\in \S$ there exists $A\in \Sigma$ satisfying $f\sim^\diamond \chi_A$.  
\end{thm}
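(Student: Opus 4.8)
The plan is to construct $\succsim^\diamond$ directly from the vector utility representation furnished by Theorem \ref{thm1} and to let the Lyapunov convexity theorem in saturated spaces carry the essential load. First I would apply Theorem \ref{thm1} to fix a $\mu$-continuous vector measure $m:\Sigma\to E$ into a separable Banach space $E$ together with a continuous function $\varphi:m(\Sigma)\to\R$ with $\varphi(0)=0$ representing $\succsim$ in the sense of \eqref{eq2}. Since $\chi_A\in\S$ for every $A\in\Sigma$, the inclusion $m(\S)\supset m(\Sigma)$ is automatic, so the only ingredient missing from the recipe \eqref{eq3} is a continuous extension $\varphi^\diamond$ of $\varphi$ from $m(\Sigma)$ to $m(\S)$.

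The key step is to invoke Proposition \ref{lyap}: because $(\Omega,\Sigma,\mu)$ is saturated and $E$ is separable, $m(\Sigma)$ is weakly compact and convex and, decisively, $m(\Sigma)=m(\S)$. Hence $\varphi$ is already defined on all of $m(\S)$, and one may simply take $\varphi^\diamond:=\varphi$; no genuine extension is required. Defining $\succsim^\diamond$ on $\S$ by \eqref{eq3} with this $\varphi^\diamond$, the relation is complete and transitive since it is numerically represented by $\nu^\diamond(f):=\varphi\bigl(\int f\,dm\bigr)$; it is continuous because, as recorded in the discussion preceding the statement, $T_m$ is continuous from $(\S,w^*)$ to $(E,w)$ and $\varphi$ is continuous on the weakly compact set $m(\Sigma)=m(\S)$ in the relative weak topology, so that $\nu^\diamond$ is weak$^*$ continuous on $\S$ and its upper and lower contour sets are weak$^*$ closed; and it extends $\succsim$ because $\nu^\diamond(\chi_A)=\varphi(m(A))$, which by \eqref{eq2} represents $\succsim$.

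Finally, the characterizing property is immediate from the same identity: given $f\in\S$, the vector $\int f\,dm$ lies in $m(\S)=m(\Sigma)$, so there is $A\in\Sigma$ with $m(A)=\int f\,dm$, whence $\nu^\diamond(f)=\varphi(m(A))=\nu^\diamond(\chi_A)$, i.e.\ $f\sim^\diamond\chi_A$. The convex case, though not asserted in the statement, goes through verbatim: Theorem \ref{thm1} then yields a quasiconcave $\varphi$ on the convex set $m(\Sigma)=m(\S)$, so that $\succsim^\diamond$ is a quasiconcave integral transformation.

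The only place where genuine content enters is the identity $m(\Sigma)=m(\S)$, that is, Proposition \ref{lyap} — this is exactly where saturation is indispensable, and it is what simultaneously makes the extension $\varphi^\diamond$ exist for free and delivers the surjectivity-type conclusion $f\sim^\diamond\chi_A$. The remaining points are routine background: the weak$^*$-to-weak continuity of the integration operator $T_m$, and the fact that a weakly compact subset of a separable Banach space is weakly metrizable, which is what legitimizes treating $\varphi$ as continuous in the relative weak topology and hence $\nu^\diamond$ as weak$^*$ continuous on $\S$.
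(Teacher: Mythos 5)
Your argument is essentially the paper's own proof: invoke Theorem \ref{thm1} to obtain the representation $\varphi\circ m$ with $E$ separable, use Proposition \ref{lyap} to get $m(\Sigma)=m(\S)$ so that one may take $\varphi^\diamond=\varphi$ in formula \eqref{eq3}, and then obtain $A$ with $m(A)=\int f\,dm$ for any $f\in\S$, giving $f\sim^\diamond\chi_A$. You merely spell out a few routine verifications (completeness, transitivity, weak$^*$ continuity of $\nu^\diamond$) that the paper leaves to the preceding discussion, so the proposal is correct and follows the same route.
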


\begin{proof}
Let $\succsim$ be a preference relation on $\Sigma$ that admits a continuous representation in a separable Banach space $E$. By Theorem \ref{thm1}, $\succsim$ has a utility function of the form $\varphi\circ m$, where $m$ is a $\mu$-continuous vector measure with values in a separable Banach space $E$ and $\varphi$ is a continuous function on $m(\Sigma)$. In view of Proposition \ref{lyap}, $m(\Sigma)=m(\S)$, and hence, the desired extension ${\succsim}^{\diamond}$ of $\succsim$ to $\S$ is given by the formula \eqref{eq3} with $\varphi=\varphi^\diamond$. 
\end{proof}

The converse of Theorem \ref{thm2} is true in the following sense, which provides another characterization of saturation. 

\begin{thm}
\label{thm3}
A finite measure space $(\Omega,\Sigma,\mu)$ is saturated if every preference relation $\succsim$ on $\Sigma$ with a continuous representation in an infinite-\hspace{0pt}dimensional Banach space and its continuous extension $\succsim^\diamond$ to $\S$ with an integral transformation are such that for every $f\in \S$ there exists $A\in \Sigma$ satisfying $f\sim^\diamond\chi_A$.  
\end{thm}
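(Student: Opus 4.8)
The plan is to prove the contrapositive: assuming $(\Omega,\Sigma,\mu)$ is \emph{not} saturated, I will construct one preference relation $\succsim$ on $\Sigma$ admitting a continuous representation in an infinite-dimensional (separable) Banach space, together with a continuous extension $\succsim^{\diamond}$ to $\S$ in the form of an integral transformation, such that \emph{some} $f_0\in\S$ satisfies $f_0\not\sim^{\diamond}\chi_A$ for \emph{every} $A\in\Sigma$; this contradicts the hypothesis and therefore forces saturation. By the characterization of saturation recalled in Section~2, non-saturation yields $X\in\Sigma$ with $\mu(X)>0$ such that $(X,\Sigma_X,\mu)$ is essentially countably generated; restricting to its nonatomic part I may assume $(X,\Sigma_X,\mu)$ is nonatomic, whence $H:=L^1\!$-type considerations aside, $H:=L^2(X,\Sigma_X,\mu)$ is an infinite-dimensional separable Hilbert space. (If no such nonatomic piece exists, non-saturation forces $X$ to contain an atom, and the construction below goes through with $H$ replaced by $\ell^2$, with $m$ supported on a one-dimensional subspace and $\varphi^{\diamond}(te_1):=t(1-t)$; I describe only the nonatomic case.)

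The vector measure is $m\colon\Sigma\to H$, $m(A):=\chi_{A\cap X}$ viewed as an element of $H$; it is countably additive and $\mu$-continuous, and since $T_m(\chi_A)=m(A)=\chi_A|_X$ and $T_m$ is linear and weak$^{*}$-to-weak continuous, one has $T_m(f)=f|_X$ for every $f\in L^\infty(\Omega,\Sigma,\mu)$. Hence $m(\Sigma)=\{\chi_C\mid C\in\Sigma_X\}$, $m(\S)=\{g\in H\mid 0\le g\le 1\}$ (weakly compact and convex, as always), and $g^{*}:=\tfrac12\chi_X=T_m(\tfrac12\chi_\Omega)$ lies in $m(\S)\setminus m(\Sigma)$, since no $\{0,1\}$-valued function equals $\tfrac12$ on the positive-measure set $X$. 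Fixing an orthonormal basis $(e_k)_{k\ge0}$ of $H$, I define $\varphi^{\diamond}\colon m(\S)\to\R$ by
\[
\varphi^{\diamond}(g):=\sum_{k\ge0}\frac{1}{2^{k}}\langle g^{*},e_k\rangle^{2}-\sum_{k\ge0}\frac{1}{2^{k}}\bigl(\langle g,e_k\rangle-\langle g^{*},e_k\rangle\bigr)^{2}.
\]
Because the weights $2^{-k}$ are summable and $m(\S)$ is bounded, both series converge uniformly on $m(\S)$, so $\varphi^{\diamond}$ is weakly (hence norm-) continuous; it is concave, thus quasiconcave; $\varphi^{\diamond}(0)=0$; and, all weights being strictly positive, $\varphi^{\diamond}$ attains its maximum on $m(\S)$ precisely at $g^{*}$. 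With $\varphi:=\varphi^{\diamond}|_{m(\Sigma)}$, the relation $A\succsim B:\Longleftrightarrow\varphi(m(A))\ge\varphi(m(B))$ is a preference satisfying Axioms~\ref{ax1} and~\ref{ax4} (its induced relation on $m(\Sigma)$ is represented by the continuous $\varphi$), i.e.\ it admits a continuous representation in the infinite-dimensional space $H$; and $\succsim^{\diamond}$ on $\S$ is the integral transformation defined by~\eqref{eq3} with this $\varphi^{\diamond}$, which extends $\succsim$.

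To see that $\succsim^{\diamond}$ is continuous, note that $\nu^{\diamond}(f)=\varphi^{\diamond}(T_m(f))=\varphi^{\diamond}(f|_X)$ is a uniformly convergent series built from squares of the maps $f\mapsto\int_X f e_k\,d\mu$, each of which is weak$^{*}$ continuous on $\S$ because $e_k\chi_X\in L^1(\Omega,\Sigma,\mu)$; hence $\nu^{\diamond}$ is weak$^{*}$ continuous on $\S$, and $\succsim^{\diamond}$ has weak$^{*}$ closed contour sets. Finally, for $f_0:=\tfrac12\chi_\Omega$ we have $\nu^{\diamond}(f_0)=\varphi^{\diamond}(g^{*})=\max_{m(\S)}\varphi^{\diamond}$, whereas for every $A\in\Sigma$ the element $m(A)=\chi_{A\cap X}$ is distinct from $g^{*}$ in $H$, so $\nu^{\diamond}(\chi_A)=\varphi^{\diamond}(m(A))<\varphi^{\diamond}(g^{*})=\nu^{\diomond}(f_0)$. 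Thus no $A\in\Sigma$ satisfies $f_0\sim^{\diamond}\chi_A$, the hypothesis of the theorem is violated, and $(\Omega,\Sigma,\mu)$ must therefore be saturated.

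The one step that needs real care is the continuity of $\succsim^{\diomond}$. The transparent candidate $\varphi^{\diamond}(g)=\int_X g(1-g)\,d\mu$ is concave, vanishes exactly on $m(\Sigma)$, and peaks at $g^{*}$, but its integral transformation $f\mapsto\int_X f\,d\mu-\int_X f^{2}\,d\mu$ is only weak$^{*}$ upper semicontinuous: sequences of indicators $\chi_{A_n}$ converging weakly$^{*}$ to $g^{*}$ keep $\int_X f^{2}\,d\mu$ away from its limit, which is exactly the oscillation phenomenon responsible for the failure of the Lyapunov convexity theorem on non-saturated spaces. Replacing $\|\cdot\|_H^{2}$ by the summable-weighted square $\sum_k 2^{-k}\langle\cdot,e_k\rangle^{2}$ restores uniform convergence, hence weak$^{*}$ continuity, while preserving the strict peak at the non-indicator function $g^{*}$; reconciling these two requirements is the substantive point. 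Everything else---the reduction to the essentially countably generated piece, and the one-line purely atomic case---is routine.
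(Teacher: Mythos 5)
Your proof is correct and reaches the conclusion by a genuinely different, self-contained route. The paper's proof cites Podczeck \cite{po08} and Sun--Yannelis \cite{sy08} to get, from non-saturation, a $\mu$-continuous vector measure with non-convex range in \emph{every} infinite-dimensional Banach space $E$; it then takes $\hat f=t\chi_C+(1-t)\chi_D$ with $\hat x=\int\hat f\,dm\notin m(\Sigma)$ and the distance-based concave peak $\varphi^\diamond(z)=\|\hat x\|-\|\hat x-z\|$. You instead build the witness by hand: a non-saturated piece $X$ yields the separable, infinite-dimensional Hilbert space $L^2(X,\Sigma_X,\mu)$ (in the nonatomic case; separability of $L^1(X)$ passes to any measurable subpiece via the restriction map), the measure $m(A)=\chi_{A\cap X}$ has range the indicators while $\tfrac12\chi_X\in m(\S)\setminus m(\Sigma)$, and your weighted-quadratic $\varphi^\diamond$ peaks uniquely there. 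What your route buys: no appeal to the cited range-nonconvexity lemma, and a utility $\nu^\diamond$ that is genuinely weak$^{*}$ continuous (a uniform limit of squares of weak$^{*}$ continuous linear functionals), so the constructed $\succsim^\diamond$ unambiguously has weak$^{*}$ closed upper \emph{and} lower contour sets; the paper's choice $\|\hat x\|-\|\hat x-z\|$ only guarantees weak$^{*}$ upper semicontinuity of $\nu^\diamond$ in general, so on the very point you flag your construction is the more careful one. What the paper's route buys: brevity, no atomic/nonatomic case split, and the extra information that the violating preference can be represented in any prescribed infinite-dimensional $E$. Two small blemishes on your side, both easily repaired: the purely atomic fallback is only sketched, and as written $\varphi^\diamond(te_1)=t(1-t)$ has its maximum over $[0,\mu(X_0)]$ at the endpoint $t=\mu(X_0)$ when $\mu(X_0)<\tfrac12$, so rescale to $t(\mu(X_0)-t)$ (or simply note that $\nu^\diamond(\tfrac12\chi_{X_0})$ differs from both values attainable at indicators) to preserve the unique-peak argument.
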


\begin{proof}
Suppose that $(\Omega,\Sigma,\mu)$ is not saturated. Then for every infinite-dimensional Banach space $E$, there exists a $\mu$-continuous vector measure $m:\Sigma\to E$ such that its range $m(\Sigma)$ is not convex; see \cite[Lemma 4]{po08} and \cite[Remark 1]{sy08}. Then there exist $x,y\in m(\Sigma)$ and $t\in (0,1)$ such that $tx+(1-t)y\not\in m(\Sigma)$. Let $C,D\in \Sigma$ be such that $x=m(C)$ and $y=m(D)$ and define $\hat{f}:=t\chi_C+(1-t)\chi_D\in \S$. Let $\varphi^\diamond:m(\S)\to \R$ be a continuous function with $\varphi^\diamond(0)=0$ that attains the maximum at the unique point $\hat{x}:=\int \hat{f}dm\in m(\S)$ (such a function can be simply given by, for instance,  the strictly concave continuous function $\varphi^\diamond(z)=\| \hat{x}\|-\| \hat{x}-z \|$). Define the preference relation $\succsim^\diamond$ on $\S$ via \eqref{eq3} and denote its restriction to $\Sigma$ by $\succsim$. Then $\succsim$ admits a continuous representation in the Banach space $E$ by Theorem \ref{thm1} (the separability of $E$ is unnecessary here) and $\succsim^\diamond$ is a continuous extension of $\succsim$ with an integral transformation. Hence, there exists $A\in \Sigma$ such that $\hat{f}\sim^\diamond\chi_A$. This means that $\varphi^\diamond(\hat{x})=\varphi^\diamond(m(A))$, and hence, $\hat{x}=tx+(1-t)y=m(A)\in m(\Sigma)$, a contradiction. 
\end{proof}

\begin{rem}
The continuous extension of nonadditive measures on $\Sigma$ to $\S$ with an integral transformation explored in Theorem \ref{thm2} is a further improvement of \cite{hs12} under the saturation hypothesis incorporating infinite dimensional vector representation. An alternative formulation of the convexity of preference relations on $\sigma$-\hspace{0pt}algebras and their representation in terms of a nonadditive measure with the quasiconcavity-\hspace{0pt}like property based on the Lyapunov convexity theorem in finite dimensions were studied in \cite{sv09}. For continuous concave extensions of nonadditive measures to $L^\infty(\Omega,\Sigma,\mu)$ based on Choquet integrals and the support functionals of the cores of exact games, see \cite{sa13}.
\end{rem}

\section{Fair Division in Saturated Measure Spaces}
\subsection{Extensions of Economies}
The problem of dividing a heterogeneous commodity among a finite number of individuals is formulated as partitioning in a finite measure space $(\Omega,\Sigma,\mu)$. Here, the set $\Omega$ is a heterogeneously divisible commodity, the $\sigma$-\hspace{0pt}algebra $\Sigma$ of subsets of $\Omega$ denotes the collection of possible pieces of $\Omega$, and the finite measure $\mu$ describes an objective evaluation for a cardinal attribute of each piece in $\Sigma$. There are $n$ individuals and the set of individuals is denoted by $I=\{ 1,2,\dots,n \}$, each of whom is  indexed by $i\in I$, whose preference relation $\succsim_i$ is defined on a common consumption set $\Sigma$. Each individual possesses an initial endowment $\Omega_i\in \Sigma$, for which $\Omega_i\cap \Omega_j=\emptyset$ for each $i\ne j$ and $\bigcup_{i=1}^n\Omega_i=\Omega$. A \textit{partition} of $\Omega$ is an ordered $n$-\hspace{0pt}tuple $(A_1,\dots,A_n)$ of mutually disjoint sets $A_1,\dots,A_n$ in $\Sigma$ whose union is $\Omega$, where each $A_i$ is a share of the divisible commodity $\Omega$ given to individual $i\in I$. Thus, $(\Omega_1,\dots,\Omega_n)$ is an \textit{initial partition} of $\Omega$. An \textit{economy} $\E=\{ (\Omega,\Sigma,\mu),(\succsim_i,\Omega_i)_{i=1}^n \}$ consists of the commodity space $(\Omega,\Sigma,\mu)$, the common consumption set $\Sigma$, on which a preference relation $\succsim_i$ is defined for each $i\in I$, and the initial endowment $\Omega_i$ of each individual. 

Following \cite{ak95,da01,hs12}, we subsume economies with commodity space $(\Omega,\Sigma,\mu)$ into ones with commodity space $L^\infty(\Omega,\Sigma,\mu)$. If each $\succsim_i^\diamond$ is a preference relation on $\S$ that is an extension of $\succsim_i$ given an economy $\E$, then $\E^\diamond=\{ L^\infty(\Omega,\Sigma,\mu),\S,({\succsim_i^\diamond},\chi_{\Omega_i})_{i=1}^n \}$ is called an \textit{extended economy} of $\E$, which consists of the commodity space $L^\infty(\Omega,\Sigma,\mu)$, the common consumption set $\S$, on which a preference relation $\succsim_i^\diamond$ is defined for each $i\in I$, and the initial endowment $\chi_{\Omega_i}\in \S$ of each individual. If $\succsim_i$ is represented by a utility function of the form $\varphi_i\circ m_i$ on $\Sigma$, where $m_i$ is a $\mu$-continuous vector measure and $\varphi_i$ is a continuous (and quasiconcave) function on $m(\Sigma)$ (with $m(\Sigma)$ being convex) that has an continuous (and quasiconcave) extension $\varphi_i^\diamond$ to $m_i(\S)$, then a continuous (and convex) extension $\succsim_i^\diamond$ to $\S$ with a (quasiconcave) integral transformation of the preference relation $\succsim_i$ on $\Sigma$ is given by the formula \eqref{eq3}. In this case, $\E^\diamond$ is called an extended economy of $\E$ with a \textit{(quasiconcave) integral transformation}. An ordered $n$-\hspace{0pt}tuple $(f_1,\dots,f_n)$ of functions in $L^\infty(\Omega,\Sigma,\mu)$ is called an \textit{allocation} of $\chi_\Omega$ if $\sum_{i=1}^nf_i(\omega)=1$ for every $\omega\in \Omega$ and $f_i\in \S$ for each $i\in I$. Every $A\in \Sigma$ is identified with its characteristic function $\chi_A$ in $\S$. Note that $(A_1,\dots,A_n)$ is a partition of $\Omega$ if and only if $(\chi_{A_1},\dots,\chi_{A_n})$ is an allocation of $\chi_\Omega$. 

\begin{dfn}
An allocation $(f_1,\dots,f_n)$ of $\chi_\Omega$ is said to be:
\begin{enumerate}[(i)]
\item \textit{individually rational} if $f_i\succsim_i^\diamond \chi_{\Omega_i}$ for each $i\in I$; 
\item \textit{Pareto optimal} if there is no allocation $(g_1,\dots,g_n)$ such that $g_i\succsim_i^\diamond f_i$ for each $i\in I$ and $g_j\succ_j^\diamond f_j$ for some $j\in I$;
\item \textit{envy-free} if $f_j\succsim_i^\diamond f_i$ for each $i,j\in I$.
\end{enumerate}
\end{dfn}

A nonempty subset of $I$ is called a \textit{coalition}. The family of coalitions is denoted by $\I$ with its typical element denoted by $S\in \I$. For a coalition $S\in \I$, an allocation $(f_1,\dots,f_n)$ of $\chi_\Omega$ is called an \textit{$S$-\hspace{0pt}allocation} if $\sum_{i\in S}f_i=\sum_{i\in S}\chi_{\Omega_i}$. Then an $I$-allocation is nothing but an allocation of $\chi_\Omega$. 

\begin{dfn}
A coalition $S\in \I$ is said to \textit{improve upon} an allocation $(f_1,\dots,f_n)$ of $\chi_\Omega$ if there exists an $S$-allocation  $(g_1,\dots,g_n)$ such that $g_i\,\succ_i^\diamond\,f_i$ for each $i\in S$. An allocation of $\chi_\Omega$ that cannot be improved upon by any coalition is called a \textit{core allocation}.
\end{dfn}

Let $\mathit{ba}(\Sigma,\mu)$ be the space of finitely additive signed measures vanishing on $\mu$-null sets, which is the price space for the heterogeneously divisible commodity $(\Omega,\Sigma,\mu)$. A price $p\in \mathit{ba}(\Sigma,\mu)$ is said to be \textit{positive} if $p(A)\ge 0$ for every $A\in \Sigma$ and $p\ne 0$. The dual space of $L^\infty(\Omega,\Sigma,\mu)$ is $\mathit{ba}(\Sigma,\mu)$, whose duality relation is denoted by $\langle p,f \rangle=\int fdp$ for $p\in \mathit{ba}(\Sigma,\mu)$ and $f\in L^\infty(\Omega,\Sigma,\mu)$. Given price $p\in \mathit{ba}(\Sigma,\mu)$, the \textit{budget set} of each individual is given by $B_i^\diamond(p,\chi_{\Omega_i}):=\{ f\in \S \mid \langle p,f \rangle\le \langle p,\chi_{\Omega_i} \rangle \}$.    

\begin{dfn}
A price-allocation pair $(p,(f_1,\dots,f_n))$ is called a \textit{Walrasian equilibrium} if $f_i$ is a maximal element for $\succsim_i$ on $B_i^\diamond(p,\chi_{\Omega_i})$ for each $i\in I$. 
\end{dfn}

Denote by $\P$ the set of partitions of $\Omega$, by $\P_\mathrm{IR}$ the set of individually rational partitions, by $\P_\mathrm{PO}$ the set of Pareto optimal partitions, by $\P_\mathrm{EF}$ the set of envy-free partitions, by $\P_\mathrm{C}$ the set of core partitions, and by $\P_\mathrm{WE}$ the set of Walrasian equilibrium partitions respectively in $\E$. Then the inclusion $\P_\mathrm{WE}\subset \P_\mathrm{C}\subset \P_\mathrm{IR}\cap\P_\mathrm{PO}$ is trivial by definition. Denote by $\A$ the set of partitions of $\chi_\Omega$, by $\A_\mathrm{IR}$ the set of individually rational partitions, by $\A_\mathrm{PO}$ the set of Pareto optimal partitions, by $\A_\mathrm{EF}$ the set of envy-free partitions, by $\A_\mathrm{C}$ the set of core allocations, and by $\A_\mathrm{WE}$ the set of Walrasian equilibrium allocations respectively. Then the inclusion $\A_\mathrm{WE}\subset \A_\mathrm{C}\subset \A_\mathrm{IR}\cap \A_\mathrm{PO}$ is trivial by definition. For every economy $\E$ and its extended economy $\E^\diamond$, the inclusions $\P\subset \A$, $\P_\mathrm{IR}\subset \A_\mathrm{IR}$, and $\P_\mathrm{EF}\subset \A_\mathrm{EF}$ are automatic.

In what follows, we assume that each $\succsim_i$ admits a continuous representation in a separable Banach space $E_i$. In view of Theorem \ref{thm1}, each $\succsim_i$ is represented by a utility function $\varphi_i\circ m_i$ on $\Sigma$, where $m_i:\Sigma\to E_i$ is a $\mu$-continuous vector measure and $\varphi_i:m_i(\Sigma)\to \R$ is a continuous function. The utility function of this form means that each individual evaluates any measurable subset $A$ of the heterogeneously divisible commodity $(\Omega,\Sigma,\mu)$ in terms of the subjective cardinal attribute $m_i(A)$ with a vector representation in $E_i$. When $E_i=\R^{l_i}$, there are $l_i$ cardinal attributes for the heterogeneously divisible commodity that is significant to individual $i$, which is the specification modelled in \cite{hu08,hu11,hs12,sa06}.

\begin{rem}
Since an extended economy $\E^\diamond$ of $\E$ is nothing but a standard exchange economy with commodity space $L^\infty(\Omega,\Sigma,\mu)$, the existence of Walrasian equilibria for $\E^\diamond$ simply follows from exactly the same argument as in \cite{be72} if one imposes Mackey continuity and convexity on each $\succsim_i^\diamond$. In the proof of Theorem \ref{thm6} below, we demonstrate the existence of Walrasian equilibria for $\E^\diamond$ in a more specific setting. 
\end{rem}

\subsection{A Characterization of Saturation}
The power of saturation is exemplified in the indifference relation between partitions in $\E$ and allocations in the extended economy $\E^\diamond$ of the original economy $\E$. The proof presented here is based on the Lyapunov convexity theorem in separable Banach spaces (Proposition \ref{lyap}) and the extreme point argument in the weakly$^*\!$ compact subset of $L^\infty$ inspired by a functional analytic approach originated in \cite{li66} and then explored in \cite{ak95,hs12}.    

\begin{thm}
\label{thm4}
Let $(\Omega,\Sigma,\mu)$ be a saturated finite measure space. Then every economy $\E=\{ (\Omega,\Sigma,\mu),(\succsim_i,\Omega_i)_{i=1}^n \}$ for which each $\succsim_i$ admits a continuous representation in a separable Banach space has its extended economy $\E^\diamond=\{ L^\infty(\Omega,\Sigma,\mu),\S,(\succsim_i^\diamond,\chi_{\Omega_i})_{i=1}^n \}$ with an integral transformation such that for every allocation $(f_1,\dots,f_n)$ in $\E^\diamond$, there exists a partition $(A_1,\dots,A_n)$ in $\E$ satisfying $f_i\sim_i^\diamond\chi_{A_i}$ for each $i\in I$. 
\end{thm}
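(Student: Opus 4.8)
The plan is to realise each given allocation, up to $\succsim_i^\diamond$-indifference for all $i$ simultaneously, by an extreme point of a suitable weakly$^*\!$ compact convex set of allocations, and then to use the Lyapunov convexity theorem in separable Banach spaces (Proposition~\ref{lyap}) to show that such an extreme point must be an $n$-tuple of characteristic functions. By Theorem~\ref{thm1}, each $\succsim_i$ is represented by $\varphi_i\circ m_i$ with $m_i:\Sigma\to E_i$ a $\mu$-continuous vector measure into a separable Banach space and $\varphi_i:m_i(\Sigma)\to\R$ continuous; since $(\Omega,\Sigma,\mu)$ is saturated, Proposition~\ref{lyap} gives $m_i(\Sigma)=m_i(\S)$, so $\varphi_i$ already plays the role of its own extension $\varphi_i^\diamond$ on $m_i(\S)$ and $\succsim_i^\diamond$ is the integral transformation of \eqref{eq3}. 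Consequently $\chi_A\sim_i^\diamond f$ holds exactly when $\varphi_i(m_i(A))=\varphi_i(\int_\Omega f\,dm_i)$, so it suffices to produce a partition $(A_1,\dots,A_n)$ of $\Omega$ with $m_i(A_i)=\int_\Omega f_i\,dm_i$ for every $i\in I$.

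Fix an allocation $(f_1,\dots,f_n)$ and set
\[
W:=\bigl\{(g_1,\dots,g_n)\in\S^n : {\textstyle\sum_{i=1}^n} g_i=\chi_\Omega,\ \int_\Omega g_i\,dm_i=\int_\Omega f_i\,dm_i\text{ for all }i\in I\bigr\}.
\]
Then $W$ is nonempty (it contains $(f_1,\dots,f_n)$), convex, and weakly$^*\!$ compact: $\S^n$ is weakly$^*\!$ compact, the constraint $\sum_i g_i=\chi_\Omega$ is weakly$^*\!$ closed, and each constraint $\int_\Omega g_i\,dm_i=\int_\Omega f_i\,dm_i$ is weakly$^*\!$ closed because $g\mapsto T_{m_i}(g)=\int_\Omega g\,dm_i$ is continuous from the weak$^*\!$ topology of $L^\infty(\Omega,\Sigma,\mu)$ to the weak topology of $E_i$. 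By the Krein--Milman theorem, $W$ has an extreme point $(g_1,\dots,g_n)$.

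The crucial step is to show that each $g_i$ is a characteristic function. If not, then, since $\sum_k g_k=\chi_\Omega$ a.e., there are indices $i\neq j$, a set $D\in\Sigma$ with $\mu(D)>0$, and $\varepsilon\in(0,\tfrac12)$ such that $\varepsilon\le g_i,g_j\le 1-\varepsilon$ a.e.\ on $D$. The space $(D,\Sigma_D,\mu)$ is again saturated and $\R\oplus E_i\oplus E_j$ is a separable Banach space, so by Proposition~\ref{lyap} the range of the $\mu$-continuous vector measure $\nu:\Sigma_D\to\R\oplus E_i\oplus E_j$, $\nu(A):=(\mu(A),m_i(A),m_j(A))$, is convex; since it contains $0=\nu(\emptyset)$ and $\nu(D)$, there is $C\in\Sigma_D$ with $\nu(C)=\tfrac12\nu(D)$, i.e.\ $\mu(C)=\tfrac12\mu(D)$, $m_i(C)=\tfrac12 m_i(D)$, and $m_j(C)=\tfrac12 m_j(D)$. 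Put $h:=\varepsilon(\chi_C-\chi_{D\setminus C})\in L^\infty(\Omega,\Sigma,\mu)$. Then $h\neq 0$ (because $\mu(C)=\mu(D\setminus C)=\tfrac12\mu(D)>0$), $\|h\|_\infty\le\varepsilon$, $h$ vanishes off $D$, and $\int_\Omega h\,dm_i=\varepsilon\bigl(2m_i(C)-m_i(D)\bigr)=0$, and likewise $\int_\Omega h\,dm_j=0$. Hence the two $n$-tuples obtained from $(g_1,\dots,g_n)$ by replacing $(g_i,g_j)$ with $(g_i+h,\,g_j-h)$, respectively with $(g_i-h,\,g_j+h)$, and leaving the remaining coordinates unchanged, both belong to $W$: they stay in $\S^n$ because $\varepsilon\le g_i,g_j\le 1-\varepsilon$ on $D$ and $\|h\|_\infty\le\varepsilon$, the constraint $\sum_k g_k=\chi_\Omega$ is preserved since the added $\pm h$ cancel, and the constraints $\int_\Omega g_i\,dm_i=\int_\Omega f_i\,dm_i$ and $\int_\Omega g_j\,dm_j=\int_\Omega f_j\,dm_j$ are preserved since $\int_\Omega h\,dm_i=\int_\Omega h\,dm_j=0$. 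Their average is $(g_1,\dots,g_n)$ and they are distinct because $h\neq 0$, contradicting extremality. Thus $g_i=\chi_{A_i}$ a.e.\ for some $A_i\in\Sigma$, $i\in I$.

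Finally, $\sum_i\chi_{A_i}=\chi_\Omega$ a.e.\ forces the sets $A_i$ to be pairwise disjoint and to cover $\Omega$ up to a $\mu$-null set; modifying them on a null set therefore yields a genuine partition $(A_1,\dots,A_n)$ of $\Omega$, and this modification changes neither $\chi_{A_i}$ in $L^\infty(\Omega,\Sigma,\mu)$ nor $m_i(A_i)$, by $\mu$-continuity. Since $(g_1,\dots,g_n)\in W$, we get $m_i(A_i)=\int_\Omega\chi_{A_i}\,dm_i=\int_\Omega g_i\,dm_i=\int_\Omega f_i\,dm_i$, whence $\varphi_i(m_i(A_i))=\varphi_i(\int_\Omega f_i\,dm_i)$, that is, $f_i\sim_i^\diamond\chi_{A_i}$ for every $i\in I$, as required. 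The one genuinely hard point is the construction of the perturbation $h$: annihilating the two (possibly infinite-dimensional) vector-valued integral constraints by a nonzero bounded function supported on $D$ is precisely where saturation is indispensable, via Proposition~\ref{lyap}; in finite dimensions this would be an elementary dimension count, and Theorem~\ref{thm3} shows that the statement fails when saturation is dropped.
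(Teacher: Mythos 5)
Your proposal is correct and follows essentially the same route as the paper: realize the allocation as an extreme point of a weakly$^*$ compact set of allocations via Krein--Milman, then use Proposition~\ref{lyap} to build a nonzero perturbation $h$ annihilating the relevant vector integrals, contradicting extremality unless every component is a characteristic function. The only (harmless) variant is that you take the convex set $W$ of allocations with exactly the same vector integrals rather than the paper's indifference set $\A(f_1,\dots,f_n)$ (which need not be convex, so the paper invokes the extreme-point lemma for compact sets), and you apply the Lyapunov theorem to $(\mu,m_i,m_j)$ on $\Sigma_D$ instead of the full product measure.
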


\begin{proof}
Let $\E=\{ (\Omega,\Sigma,\mu),(\succsim_i,\Omega_i)_{i=1}^n \}$ be an economy satisfying the hypothesis of the theorem. In view of Theorem \ref{thm1}, each $\succsim_i$ is represented by a utility function of the form $\varphi_i\circ m_i$, where $m_i$ is a $\mu$-continuous vector measure with values in a separable Banach space $E_i$ and $\varphi_i$ is a continuous function on $m_i(\Sigma)$. Since $m_i(\Sigma)=m_i(\S)$ for each $i\in I$ by Proposition \ref{lyap}, $\E$ possesses its extended economy $\E^\diamond=\{ L^\infty(\Omega,\Sigma,\mu),\S,(\succsim_i^\diamond,\chi_{\Omega_i})_{i=1}^n \}$ with an integral transformation determined by the formula \eqref{eq3}. Then as demonstrated in \cite[Lemma 3.2]{hs13}, by the Banach--Alaoglu theorem \cite[Corollary V.4.4]{ds58}, $\A$ is weakly$^*\!$ compact in the $n$-fold product space $[L^\infty(\Omega,\Sigma,\mu)]^n$ of $L^\infty(\Omega,\Sigma,\mu)$. Take any $(f_1,\dots,f_n)\in \A$ and let 
$$
\A{(f_1,\dots,f_n)}:=\left\{ (g_1,\dots,g_n)\in \A\mid g_i\sim_i^\diamond f_i \ \forall i\in I \right\}.
$$
Since each $\succsim_i^\diamond$ is continuous, the set $\A{(f_1,\dots,f_n)}$ is nonempty and weakly$^*\!$ compact in $[L^\infty(\Omega,\Sigma,\mu)]^n$. According to the Krein--\hspace{0pt}Milman theorem \cite[Lemma V.8.2]{ds58}, $\A{(f_1,\dots,f_n)}$ has an extreme point $(g_1,\dots,g_n)$. We claim that each of $g_i$ is a characteristic function. Suppose, to the contrary, that $g_j$ is not a characteristic function for some $j$. By virtue of the fact that $\sum_{i=1}^ng_i=1$ and $g_i\ge 0$ for each $i\in I$, we may assume without loss of generality that there exist $\varepsilon>0$ and $A\in \Sigma$ with $\mu(A)>0$ such that $\varepsilon<g_1,g_2<1-\varepsilon$ on $A$. Define the $\mu$-continuous vector measure $m:\Sigma\to \R\times \prod_{i=1}^nE_i$ by $m:=(\mu,m_1,\dots,m_n)$. It follows from Proposition \ref{lyap} that there exists a measurable subset $B\subset A$ such that $m(B)=m(A)/2$. Set $h=\varepsilon(\chi_A-2\chi_B)$. Then $h\ne 0$, $0\le g_1\pm h,g_2\pm h\le 1$, and $\int hdm=0$. Since $\varphi_i(\int(g_i\pm h)dm_i)=\varphi_i(\int g_idm_i)=\varphi_i(\int f_idm_i)$ for $i=1,2$, we have $(g_1\pm h,g_2\mp h,g_3,\dots,g_n) \in \A{(f_1,\dots,f_n)}$. This yields:
$$
(g_1,\dots,g_n)=\frac{1}{2}\left[ (g_1+h,g_2-h,g_3,\dots,g_n)+(g_1-h,g_2+h,g_3,\dots,g_n) \right],
$$ 
which means that $(g_1,\dots,g_n)$ is a convex combination of two distinct elements $(g_1+h,g_2-h,g_3,\dots,g_n)$ and $(g_1-h,g_2+h,g_3,\dots,g_n)$ in $\A{(f_1,\dots,f_n)}$, an obvious contradiction to the fact that $(g_1,\dots,g_n)$ is an extreme point in $\A{(f_1,\dots,f_n)}$. 
\end{proof}

\begin{cor}
If $\E$ has its extended economy $\E^\diamond$ with an integral transformation, then the inclusions $\P_\mathrm{PO}\subset \A_\mathrm{PO}$ and $\P_\mathrm{C}\subset \A_\mathrm{C}$ hold whenever $(\Omega,\Sigma,\mu)$ is saturated. 
\end{cor}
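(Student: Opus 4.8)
The plan is to derive both inclusions by contraposition, turning a dominating \emph{allocation} in the extended economy $\E^\diamond$ into a dominating \emph{partition} in the original economy $\E$. For $\P_\mathrm{PO}\subset\A_\mathrm{PO}$ this is immediate from Theorem~\ref{thm4}: suppose $(A_1,\dots,A_n)\in\P_\mathrm{PO}$ but $(\chi_{A_1},\dots,\chi_{A_n})\notin\A_\mathrm{PO}$, so that there is an allocation $(g_1,\dots,g_n)$ of $\chi_\Omega$ with $g_i\succsim_i^\diamond\chi_{A_i}$ for every $i\in I$ and $g_j\succ_j^\diamond\chi_{A_j}$ for some $j$. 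By Theorem~\ref{thm4} there is a partition $(B_1,\dots,B_n)$ of $\Omega$ with $\chi_{B_i}\sim_i^\diamond g_i$ for every $i$. Transitivity of $\succsim_i^\diamond$ gives $\chi_{B_i}\succsim_i^\diamond\chi_{A_i}$ for all $i$, while $\chi_{B_j}\sim_j^\diamond g_j$ together with $g_j\succ_j^\diamond\chi_{A_j}$ gives $\chi_{B_j}\succ_j^\diamond\chi_{A_j}$ (the only routine verification, from completeness and transitivity); since $\succsim_i^\diamond$ extends $\succsim_i$, this reads $B_i\succsim_i A_i$ for all $i$ and $B_j\succ_j A_j$, contradicting $(A_1,\dots,A_n)\in\P_\mathrm{PO}$.

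For $\P_\mathrm{C}\subset\A_\mathrm{C}$, suppose $(A_1,\dots,A_n)\in\P_\mathrm{C}$ but $(\chi_{A_1},\dots,\chi_{A_n})\notin\A_\mathrm{C}$, so that some coalition $S\in\I$ improves upon it via an $S$-allocation $(g_1,\dots,g_n)$ with $g_i\succ_i^\diamond\chi_{A_i}$ for every $i\in S$. Theorem~\ref{thm4} cannot be quoted verbatim here, because the partition it produces need not satisfy $\sum_{i\in S}\chi_{B_i}=\sum_{i\in S}\chi_{\Omega_i}$. Instead I would rerun the extreme-point argument of the proof of Theorem~\ref{thm4} inside the set
$$
\A_S(g_1,\dots,g_n):=\Bigl\{(h_1,\dots,h_n)\in\A\ \Big|\ h_i\sim_i^\diamond g_i\ \ \forall i\in I,\ \ \textstyle\sum_{i\in S}h_i=\sum_{i\in S}\chi_{\Omega_i}\Bigr\},
$$
which contains $(g_1,\dots,g_n)$ and is weakly$^*\!$ compact in $[L^\infty(\Omega,\Sigma,\mu)]^n$ for the reasons given in the proof of Theorem~\ref{thm4}, together with the weak$^*\!$ continuity of $(h_i)_i\mapsto\sum_{i\in S}h_i$. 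By the Krein--Milman theorem $\A_S(g_1,\dots,g_n)$ has an extreme point $(h_1,\dots,h_n)$, and once every $h_i$ is shown to be a characteristic function one has $(h_i)=(\chi_{B_i})$ for a partition $(B_1,\dots,B_n)$ with $\sum_{i\in S}\chi_{B_i}=\sum_{i\in S}\chi_{\Omega_i}$ and $\chi_{B_i}\sim_i^\diamond g_i$ for $i\in S$; restricting to $\Sigma$, $B_i\succ_i A_i$ for all $i\in S$, so this $S$-partition improves upon $(A_1,\dots,A_n)$ in $\E$, contradicting $(A_1,\dots,A_n)\in\P_\mathrm{C}$.

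The main obstacle --- and the only genuine deviation from the proof of Theorem~\ref{thm4} --- is showing that the extreme point consists of characteristic functions while respecting the coalition constraint $\sum_{i\in S}h_i=\sum_{i\in S}\chi_{\Omega_i}$. If some $h_j$ is not a characteristic function, the structure of an $S$-allocation confines its fractional values to the correct block: writing $\Omega_S:=\bigcup_{i\in S}\Omega_i$, one has $h_i=0$ on $\Omega\setminus\Omega_S$ for $i\in S$ and $h_i=0$ on $\Omega_S$ for $i\notin S$, so the positive-measure set $\{0<h_j<1\}$ lies in $\Omega_S$ or in $\Omega\setminus\Omega_S$ according to whether $j\in S$, and finiteness of $I$ yields a second index $k$ on the same side of $\{S,I\setminus S\}$ as $j$ with $0<h_k<1$ on a positive-measure subset; after shrinking, this subset is a set $A$ with $\varepsilon<h_j,h_k<1-\varepsilon$ on $A$ for some $\varepsilon>0$. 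Choosing $B\subset A$ via Proposition~\ref{lyap} with $m(B)=m(A)/2$ for the separably-valued $\mu$-continuous vector measure $m:=(\mu,m_1,\dots,m_n)$ and perturbing $h_j\mapsto h_j\pm h$, $h_k\mapsto h_k\mp h$ with $h:=\varepsilon(\chi_A-2\chi_B)$ leaves $\sum_i h_i\equiv 1$ intact, leaves $\sum_{i\in S}h_i$ unchanged because $j$ and $k$ lie on the same side, and leaves every value $\varphi_i(\int h_i\,dm_i)$ --- hence every indifference $h_i\sim_i^\diamond g_i$ --- unchanged because $\int h\,dm=0$; thus $(h_1,\dots,h_n)$ is a proper convex combination of two distinct members of $\A_S(g_1,\dots,g_n)$, contradicting extremality. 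The routine null-set adjustments turning the almost-everywhere partition into a genuine one are suppressed exactly as in the proof of Theorem~\ref{thm4}, and I expect this block-and-perturbation bookkeeping to be the only point requiring care; the rest is a transcription of Theorem~\ref{thm4}.
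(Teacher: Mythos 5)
Your proof of the first inclusion is exactly the paper's argument. For the second inclusion you take a genuinely different (and more careful) route: the paper simply invokes Theorem \ref{thm4} to get a partition $(B_1,\dots,B_n)$ of $\Omega$ with $f_i\sim_i^\diamond\chi_{B_i}$ and declares $B_i\succ_i A_i$ for $i\in S$ a contradiction to the core property of $(A_1,\dots,A_n)$, silently ignoring the point you flag, namely that the partition produced by Theorem \ref{thm4} need not reallocate only the coalition's endowment $\bigcup_{i\in S}\Omega_i$, so it is not literally an admissible $S$-improvement in $\E$. Your repair --- rerunning the Krein--Milman/Lyapunov perturbation inside the weak$^*\!$ compact set $\A_S(g_1,\dots,g_n)$ that carries the additional linear constraint $\sum_{i\in S}h_i=\sum_{i\in S}\chi_{\Omega_i}$, and trading only between two indices on the same side of $\{S, I\setminus S\}$ so that the constraint, the adding-up condition, and all the values $\varphi_i(\int h_i\,dm_i)$ are preserved --- is correct: the block structure of an $S$-allocation ($h_i=0$ off $\bigcup_{i\in S}\Omega_i$ for $i\in S$, $h_i=0$ on it for $i\notin S$) guarantees that a non-characteristic coordinate always has a fractional partner in its own block (a singleton block forces a characteristic function), so the perturbation step goes through verbatim. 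What the paper's shortcut buys is brevity at the price of an unjustified step; what yours buys is the missing coalition feasibility. An equivalent, slightly shorter fix you might note is to apply Theorem \ref{thm4} (or its proof) to the restricted measure space $(\Omega_S,\Sigma_{\Omega_S},\mu)$ with $\Omega_S=\bigcup_{i\in S}\Omega_i$, which is again saturated, and to the coalition's subeconomy, turning $(f_i)_{i\in S}$ directly into an $S$-partition indifferent coordinatewise.
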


\begin{proof}
To demonstrate the first inclusion, take any Pareto optimal partition $(A_1,\dots,A_n)$ in $\E$. If $(\chi_{A_1},\dots,\chi_{A_n})$ is not a Pareto optimal allocation in $\E^\diamond$, then there exists another allocation $(f_1,\dots,f_n)$ such that $f_i\,\succsim_i^\diamond\,\chi_{A_i}$ for each $i\in I$ and $f_j\,\succ_j^\diamond\,\chi_{A_j}$ for some $j\in I$. It follows from Theorem \ref{thm4} that there exists a partition $(B_1,\dots,B_n)$ in $\E$ such that $f_i\sim_i^\diamond\chi_{B_i}$ for each $i\in I$, which yields $B_i\,\succsim_i\,A_i$ for each $i\in I$ and $B_j\,\succ_j\,A_j$ for $j$, a contradiction to the Pareto optimality of $(A_1,\dots,A_n)$ in $\E$. 

To demonstrate the second inclusion, take any core partition $(A_1,\dots,A_n)$ in $\E$. If $(\chi_{A_1},\dots,\chi_{A_n})$ is not a core allocation in $\E^\diamond$, then some coalition $S\in \I$ improves upon $(\chi_{A_1},\dots,\chi_{A_n})$, and hence, there exists an $S$-allocation $(f_1,\dots,f_n)$ such that $f_i\,\succ_i^\diamond\,\chi_{A_i}$ for each $i\in S$. It follows from Theorem \ref{thm4} that there exists a partition $(B_1,\dots,B_n)$ in $\E$ such that $f_i\sim_i^\diamond\chi_{B_i}$ for each $i\in I$, which yields $B_i\,\succ_i\,A_i$ for each $i\in S$, a contradiction to the fact that $(A_1,\dots,A_n)$ is a core partition in $\E$.
\end{proof}

The following converse of Theorem \ref{thm4} yields an intriguing characterization of saturation in terms of the interplay between the economy $\E$ with preference relations with a continuous representation in a Banach space and its extended economy $\E^\diamond$ with an integral transformation. 

\begin{thm}
A finite measure space $(\Omega,\Sigma,\mu)$ is saturated if every economy $\E=\{ (\Omega,\Sigma,\mu),(\succsim_i,\Omega_i)_{i=1}^n \}$ for which each $\succsim_i$ admits a continuous representation in an infinite-dimensional Banach space and its extended economy $\E^\diamond=\{ L^\infty(\Omega,\Sigma,\mu),\S,(\succsim_i^\diamond,\chi_{\Omega_i})_{i=1}^n \}$ with an integral transformation are such that for every allocation $(f_1,\dots,f_n)$ in $\E^\diamond$ there exists a partition $(A_1,\dots,A_n)$ in $\E$ satisfying $f_i\sim_i^\diamond\chi_{A_i}$ for each $i\in I$.  
\end{thm}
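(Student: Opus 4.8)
The plan is to prove the contrapositive, mirroring the structure of the proof of Theorem~\ref{thm3}. Assume $(\Omega,\Sigma,\mu)$ is not saturated. Then by \cite[Lemma 4]{po08} and \cite[Remark 1]{sy08}, for any infinite-dimensional Banach space $E$ there is a $\mu$-continuous vector measure $m:\Sigma\to E$ whose range $m(\Sigma)$ fails to be convex, so we may fix $x=m(C)$, $y=m(D)$ with $C,D\in\Sigma$ and $t\in(0,1)$ such that $\hat x:=tx+(1-t)y\notin m(\Sigma)$. The idea is to engineer a one-consumer-active economy in which the indifference-to-partition property would force $\hat x$ back into $m(\Sigma)$, the same contradiction exploited in Theorem~\ref{thm3}.

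First I would construct the economy. Set $E_1=E$, $m_1=m$, and let $\hat f:=t\chi_C+(1-t)\chi_D\in\S$, so that $\int\hat f\,dm_1=\hat x$. Pick a strictly concave continuous $\varphi_1^\diamond:m_1(\S)\to\R$ with $\varphi_1^\diamond(0)=0$ attaining its maximum at the unique point $\hat x$ (for instance $\varphi_1^\diamond(z)=\|\hat x\|-\|\hat x-z\|$), and let $\succsim_1$, $\succsim_1^\diamond$ be the preference relation and its integral-transformation extension defined via \eqref{eq3}; by Theorem~\ref{thm1} (separability of $E$ is not needed for the ``if'' direction invoked here) $\succsim_1$ admits a continuous representation in the infinite-dimensional Banach space $E$. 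For $i=2,\dots,n$ I would install trivial individuals with, say, $E_i=\R$, $m_i=\mu$, and $\varphi_i^\diamond\equiv 0$, so that every element of $\S$ is $\sim_i^\diamond$-indifferent; this is harmless because the hypothesis quantifies over \emph{all} such economies. Finally choose any initial partition $(\Omega_1,\dots,\Omega_n)$ of $\Omega$; its precise form is irrelevant since the argument only uses a single allocation, not individual rationality.

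Next I would produce the offending allocation. Since $\hat f\in\S$ with $0\le\hat f\le 1$, the tuple $(f_1,\dots,f_n):=(\hat f,\tfrac{1-\hat f}{n-1},\dots,\tfrac{1-\hat f}{n-1})$ is an allocation of $\chi_\Omega$ in $\E^\diamond$ (each coordinate lies in $\S$ and the coordinates sum pointwise to $1$). By the standing hypothesis applied to this $\E$ and $\E^\diamond$, there is a partition $(A_1,\dots,A_n)$ in $\E$ with $f_i\sim_i^\diamond\chi_{A_i}$ for each $i$. For $i=1$ this reads $\varphi_1^\diamond(\hat x)=\varphi_1^\diamond\!\left(\int\chi_{A_1}dm_1\right)=\varphi_1^\diamond(m_1(A_1))$, and since $\varphi_1^\diamond$ attains its maximum uniquely at $\hat x$ we get $\hat x=m_1(A_1)=m(A_1)\in m(\Sigma)$, contradicting the choice of $\hat x$. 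Hence $(\Omega,\Sigma,\mu)$ must be saturated.

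There is no serious analytic obstacle here: the heavy lifting—the existence of a non-convex-range vector measure on a non-saturated space, and the reduction of continuous extensions to integral transformations of vector measures—is already packaged in the cited lemmas and in Theorems~\ref{thm1} and~\ref{thm3}. The only point requiring a little care is bookkeeping: one must exhibit a genuine $n$-tuple allocation (not just a single function) and must make the ``dummy'' individuals $i\ge 2$ legitimately fit the definition of an economy with a continuous representation in an infinite-dimensional Banach space if one wants to stay strictly inside the hypothesis—this is why a cleaner choice is to take $E_i=E$ and $m_i=m$ for all $i$ but with constant $\varphi_i^\diamond$ for $i\ge2$, so that the infinite-dimensionality clause is met verbatim. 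Beyond that the proof is essentially a transcription of the Theorem~\ref{thm3} argument into the multi-agent, allocation-versus-partition language.
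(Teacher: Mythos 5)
Your proposal is correct and follows essentially the same route as the paper's own proof: the paper also assumes non-saturation, takes the non-convex-range measure $m$, sets $f_1=t\chi_C+(1-t)\chi_D$ and $f_j=(n-1)^{-1}(1-f_1)$ for $j\ge 2$, gives individual $1$ a continuous function uniquely maximized at $\int f_1\,dm$ and the others arbitrary continuous $\varphi_j^\diamond$ with $\varphi_j^\diamond(0)=0$ over the same $E$ and $m$, and derives the same contradiction $tx+(1-t)y\in m(\Sigma)$. Your closing remark about using the common infinite-dimensional $E$ for the dummy individuals is exactly how the paper handles that point.
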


\begin{proof}
Suppose that $(\Omega,\Sigma,\mu)$ is not saturated. As noted in the proof of Theorem \ref{thm3}, for every infinite-\hspace{0pt}dimensional Banach space $E$, there exists a $\mu$-continuous vector measure $m:\Sigma\to E$ such that its range $m(\Sigma)$ is not convex. Then there exist $x,y\in m(\Sigma)$ and $t\in (0,1)$ such that $tx+(1-t)y\not\in m(\Sigma)$. Let $C,D\in \Sigma$ be such that $x=m(C)$ and $y=m(D)$ and define $f_1:=t\chi_C+(1-t)\chi_D\in \S$ and $f_j:=(n-1)^{-1}(1-f_1)\in \S$ for $j=2,3,\dots,n$. By construction, $\sum_{i=1}^nf_i=1$. Let $\varphi_1^\diamond:m(\S)\to \R$ be a continuous function with $\varphi_1^\diamond(0)=0$ that attains the maximum at the unique point $x_1:=\int f_1dm\in m(\S)$. For $j=2,3,\dots,n$, take any continuous function $\varphi_j^\diamond:m(\S)\to \R$ with $\varphi_j^\diamond(0)=0$, define the preference relation $\succsim_i^\diamond$ on $\S$ via the formula \eqref{eq3}, and denote its restriction to $\Sigma$ by $\succsim_i$ for each $i\in I$. Then $\E=\{ (\Omega,\Sigma,\mu),(\succsim_i,\Omega_i)_{i=1}^n \}$ is an economy for which each $\succsim_i$ admits a continuous representation in the common Banach space $E$ by Theorem \ref{thm1} and $\E^\diamond=\{ L^\infty(\Omega,\Sigma,\mu),\S,(\succsim_i^\diamond,\chi_{\Omega_i})_{i=1}^n \}$ is an extended economy of $\E$ with an integral transformation. Since $(f_1,\dots,f_n)$ is an allocation in $\E^\diamond$, there exists a partition $(A_1,\dots,A_n)$ in $\E$ such that $f_i\sim_i^\diamond\chi_{A_i}$ for each $i\in I$. This means that $\varphi_1^\diamond(x_1)=\varphi_1^\diamond(m(A_1))$, and hence, $x_1=tx+(1-t)y=m(A_1)\in m(\Sigma)$, a contradiction. 
\end{proof}

\subsection{Existence Results}
We present four existence results on partitions in $\E$. We first show under the saturation hypothesis the existence of individually rational Pareto optimal partitions without any convexity assumption on the preferences of each individual. 

\begin{thm}
\label{thm5}
If $(\Omega,\Sigma,\mu)$ is saturated, then for every economy $\E=\linebreak\{ (\Omega,\Sigma,\mu),(\succsim_i,\Omega_i)_{i=1}^n \}$ for which each $\succsim_i$ admits a continuous representation in a separable Banach space, there exists an individually rational Pareto optimal partition in $\E$. 
\end{thm}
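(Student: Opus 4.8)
The plan is to solve the problem first in the extended economy $\E^\diamond$, where a finite-dimensional compactness argument becomes available via the saturation hypothesis, and then to transport the solution back to a partition of $\Omega$ through Theorem \ref{thm4}. To set up, by Theorem \ref{thm1} each $\succsim_i$ is represented by a utility function $\varphi_i\circ m_i$ on $\Sigma$, where $m_i:\Sigma\to E_i$ is a $\mu$-continuous vector measure into a separable Banach space $E_i$ and $\varphi_i:m_i(\Sigma)\to\R$ is continuous; by Proposition \ref{lyap} one has $m_i(\Sigma)=m_i(\S)$, so $\E$ possesses an extended economy $\E^\diamond$ with an integral transformation given by \eqref{eq3}, and each $\nu_i^\diamond(f):=\varphi_i(\int f\,dm_i)$ is weakly$^*\!$ continuous on $\S$. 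Recall also that $\A$ is weakly$^*\!$ compact in $[L^\infty(\Omega,\Sigma,\mu)]^n$.

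First I would produce an individually rational Pareto optimal \emph{allocation} of $\E^\diamond$. The set $\A_\mathrm{IR}$ is nonempty, since $(\chi_{\Omega_1},\dots,\chi_{\Omega_n})\in\A_\mathrm{IR}$, and it is weakly$^*\!$ closed in $\A$ because each $\succsim_i^\diamond$ is continuous, hence weakly$^*\!$ compact. Thus the utility image $V:=\{(\nu_1^\diamond(f_1),\dots,\nu_n^\diamond(f_n)):(f_1,\dots,f_n)\in\A_\mathrm{IR}\}$ is a nonempty compact subset of $\R^n$. Setting $z_0:=(\nu_1^\diamond(\chi_{\Omega_1}),\dots,\nu_n^\diamond(\chi_{\Omega_n}))\in V$, let $z^*$ maximize $\sum_{i=1}^n z_i$ over the nonempty compact set $V\cap(z_0+\R_+^n)$. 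Then $z^*$ is maximal in $V$ for the coordinatewise order: any $z'\in V$ with $z'\ge z^*$ and $z'_j>z^*_j$ for some $j$ would satisfy $z'\ge z^*\ge z_0$, hence lie in $V\cap(z_0+\R_+^n)$ while $\sum_i z'_i>\sum_i z^*_i$, a contradiction. Choosing $(f_1^*,\dots,f_n^*)\in\A_\mathrm{IR}$ with $\nu_i^\diamond(f_i^*)=z_i^*$ for each $i$, this allocation is individually rational, and it is Pareto optimal in $\E^\diamond$: any allocation $(g_1,\dots,g_n)$ with $g_i\succsim_i^\diamond f_i^*$ for all $i$ satisfies $g_i\succsim_i^\diamond\chi_{\Omega_i}$ by transitivity, so $(g_1,\dots,g_n)\in\A_\mathrm{IR}$, its utility vector lies in $V$ and is $\ge z^*$, whence maximality of $z^*$ forces $g_i\sim_i^\diamond f_i^*$ for every $i$.

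Next I would descend to a partition. By Theorem \ref{thm4} there is a partition $(A_1,\dots,A_n)$ of $\Omega$ with $\chi_{A_i}\sim_i^\diamond f_i^*$ for each $i$. Then $\chi_{A_i}\sim_i^\diamond f_i^*\succsim_i^\diamond\chi_{\Omega_i}$ gives $A_i\succsim_i\Omega_i$, so $(A_1,\dots,A_n)\in\P_\mathrm{IR}$. And if some partition $(B_1,\dots,B_n)$ satisfied $B_i\succsim_i A_i$ for all $i$ and $B_j\succ_j A_j$ for some $j$, then $\chi_{B_i}\succsim_i^\diamond\chi_{A_i}\sim_i^\diamond f_i^*$ for all $i$ and $\chi_{B_j}\succ_j^\diamond f_j^*$, so $(\chi_{B_1},\dots,\chi_{B_n})$ would be an allocation Pareto dominating $(f_1^*,\dots,f_n^*)$ in $\E^\diamond$, contradicting the previous paragraph; hence $(A_1,\dots,A_n)\in\P_\mathrm{PO}$ as well, so it is the desired individually rational Pareto optimal partition.

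The steps I expect to require the most care are, first, the passage from Pareto optimality within $\A_\mathrm{IR}$ to Pareto optimality in all of $\E^\diamond$, which rests on the observation that any improving allocation automatically inherits individual rationality by transitivity, and second, the descent to a genuine partition, which is precisely where saturation is indispensable, entering through Theorem \ref{thm4} (equivalently Proposition \ref{lyap}) to realize the indifference class of $(f_1^*,\dots,f_n^*)$ by characteristic functions. The minor device of intersecting $V$ with $z_0+\R_+^n$ before maximizing the linear functional $\sum_i z_i$ is what simultaneously yields honest Pareto optimality, rather than merely weak optimality, and preserves individual rationality.
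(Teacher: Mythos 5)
Your proposal is correct and follows essentially the paper's own route: maximize the utilitarian sum of the weakly$^*\!$ continuous utilities $\nu_i^\diamond$ over the weakly$^*\!$ compact set $\A_{\mathrm{IR}}$, then transport the maximizer to a partition via Theorem \ref{thm4}, with the same transitivity observation ensuring that any Pareto-improving allocation stays in $\A_{\mathrm{IR}}$. The only cosmetic difference is your intersection of the utility image $V$ with $z_0+\R_+^n$, which is redundant because $V$ is already the image of $\A_{\mathrm{IR}}$ (every point of $V$ dominates $z_0$) and maximizing $\sum_i z_i$ with strictly positive weights already yields full, not merely weak, Pareto optimality.
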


\begin{proof}
It follows from Theorem \ref{thm4} that the economy $\E$ has its extended economy $\E^\diamond=\{ L^\infty(\Omega,\Sigma,\mu),\S,(\succsim_i^\diamond,\chi_{\Omega_i})_{i=1}^n \}$ with an integral transformation such that for every allocation $(f_1,\dots,f_n)$ in $\E^\diamond$, there exists a partition $(A_1,\dots,A_n)$ in $\E$ satisfying $f_i\sim_i^\diamond\chi_{A_i}$ for each $i\in I$. Since each $\succsim_i^\diamond$ is represented by the form \eqref{eq3}, where $m_i:\Sigma \to E_i$ is a $\mu$-continuous vector measure with values in a separable Banach space $E_i$ and $\varphi_i^\diamond:m_i(\S)\to \R$ is a continuous function, and $\A$ is weakly$^*\!$ compact in $[L^\infty(\Omega,\Sigma,\mu)]^n$, so is $\A_{\text{IR}}$. Then there exists a solution to the maximization problem:
\begin{equation}
\label{P}
\max\left\{ \sum_{i=1}^n\varphi_i^\diamond\left( \int_\Omega f_i(\omega)dm_i \right) \mid (f_1,\dots,f_n)\in \A_{\text{IR}} \right \} \tag{P}.
\end{equation}
The individual rationality of a solution $(f_1,\dots,f_n)$ to \eqref{P} is obvious. If $(f_1,\dots,f_n)$ is not Pareto optimal, then there is another allocation $(g_1,\dots,g_n)$ such that $g_i\succsim_i^\diamond f_i$ for each $i\in I$ and $g_j\succ_j^\diamond f_j$ for some $j\in I$. This means that $\sum_{i=1}^n\varphi_i^\diamond(\int g_idm_i)>\sum_{i=1}^n\varphi_i^\diamond(\int f_idm_i)$, a contradiction to the fact that $(f_1,\dots,f_n)$ is a solution to \eqref{P}. Therefore, any partition $(A_1,\dots,A_n)$ satisfying $f_i\sim_i^\diamond\chi_{A_i}$ for each $i\in I$ is an individually rational Pareto optimal partition in $\E$. 
\end{proof}

For the existence of core partitions, the convexity of preferences of each individual are imposed additionally. 

\begin{thm}
If $(\Omega,\Sigma,\mu)$ is saturated, then for every economy $\E=\linebreak\{ (\Omega,\Sigma,\mu),(\succsim_i,\Omega_i)_{i=1}^n \}$ for which each $\succsim_i$ admits a continuous convex representation in a separable Banach space, there exists a core partition in $\E$. 
\end{thm}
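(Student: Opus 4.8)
The plan is to pass to the extended economy $\E^\diamond$ and exhibit a core allocation there, then pull it back to a core partition in $\E$. First I would invoke Theorem \ref{thm4} to obtain the extended economy $\E^\diamond=\{L^\infty(\Omega,\Sigma,\mu),\S,(\succsim_i^\diamond,\chi_{\Omega_i})_{i=1}^n\}$ with an integral transformation, where each $\succsim_i^\diamond$ is represented by $f\mapsto\varphi_i^\diamond(\int f\,dm_i)$ with $m_i:\Sigma\to E_i$ a $\mu$-continuous vector measure into a separable Banach space and $\varphi_i^\diamond$ continuous \emph{and quasiconcave} on $m_i(\S)$ (quasiconcavity is available precisely because each $\succsim_i$ now admits a \emph{convex} representation, via Theorem \ref{thm1}). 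Thus $\succsim_i^\diamond$ is weakly$^*$ continuous and convex on $\S$. The existence of a core allocation for the exchange economy $\E^\diamond$ with commodity space $L^\infty$ then follows from a standard argument: by the Corollary to Theorem \ref{thm4}, once $(\chi_{A_1},\dots,\chi_{A_n})$ is a core allocation in $\E^\diamond$ for a partition $(A_1,\dots,A_n)$ it descends to a core partition in $\E$, so it suffices to produce \emph{some} core allocation in $\A$ and then apply Theorem \ref{thm4} to replace it by one supported on characteristic functions.

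The core of $\E^\diamond$ is nonempty by the classical Scarf-type argument adapted to this infinite-dimensional setting. Concretely, I would first establish the existence of a Walrasian equilibrium $(p,(f_1,\dots,f_n))$ for $\E^\diamond$ with $p\in\mathit{ba}(\Sigma,\mu)$ positive — this is sketched in the Remark preceding the statement (invoking the Bewley-type argument of \cite{be72}, using Mackey/weak$^*$ continuity and convexity of each $\succsim_i^\diamond$, which hold here) and will be carried out in detail in the proof of Theorem \ref{thm6} below — and then use the standard fact that every Walrasian equilibrium allocation lies in the core: if a coalition $S$ improved upon $(f_1,\dots,f_n)$ via an $S$-allocation $(g_1,\dots,g_n)$ with $g_i\succ_i^\diamond f_i$ for all $i\in S$, then by individual optimality in the budget set each $g_i$ would satisfy $\langle p,g_i\rangle>\langle p,\chi_{\Omega_i}\rangle$, contradicting $\sum_{i\in S}g_i=\sum_{i\in S}\chi_{\Omega_i}$. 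Hence $\A_\mathrm{WE}\subset\A_\mathrm{C}$ and in particular $\A_\mathrm{C}\ne\emptyset$.

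Finally I would convert this core allocation into a core partition. Given a core allocation $(f_1,\dots,f_n)\in\A_\mathrm{C}$ of $\E^\diamond$, Theorem \ref{thm4} furnishes a partition $(A_1,\dots,A_n)$ in $\E$ with $\chi_{A_i}\sim_i^\diamond f_i$ for each $i$. I claim $(A_1,\dots,A_n)\in\P_\mathrm{C}$: if some coalition $S$ improved upon it in $\E$ via an $S$-allocation of partitions $(B_i)_{i\in S}$ with $B_i\succ_i A_i$ for all $i\in S$, then setting $g_i=\chi_{B_i}$ for $i\in S$ (extended to an $S$-allocation of $\chi_\Omega$) would give $g_i\succ_i^\diamond\chi_{A_i}\sim_i^\diamond f_i$ for all $i\in S$, so $S$ improves upon $(f_1,\dots,f_n)$ in $\E^\diamond$ — contradiction. (This is just the Corollary's inclusion $\P_\mathrm{C}\subset\A_\mathrm{C}$ read contrapositively together with the indifference substitution.) Therefore $(A_1,\dots,A_n)$ is a core partition in $\E$.

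The main obstacle is the existence of a Walrasian (or directly a core) allocation in the $L^\infty$ economy $\E^\diamond$: one must verify that the hypotheses of the infinite-dimensional equilibrium existence theorem are met — weak$^*$ compactness of $\A$ (already recorded in the proof of Theorem \ref{thm4}), properness/monotonicity and convexity of the extended preferences, and the existence of a \emph{positive} supporting price in $\mathit{ba}(\Sigma,\mu)$ rather than merely a finitely additive one that could charge only the "non-$\mu$-continuous" part. Since the paper defers this to Theorem \ref{thm6}, in the present proof I would simply cite that result (or the argument of \cite{be72}) for the nonemptiness of $\A_\mathrm{WE}$, and the remaining steps are the routine core-inclusion argument and the pull-back via Theorem \ref{thm4}.
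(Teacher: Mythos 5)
Your overall strategy---produce a core allocation in the extended economy $\E^\diamond$ and pull it back to a partition---matches the paper's, and your final pull-back step via Theorem \ref{thm4} is sound. The genuine gap is in how you produce the core allocation. You propose to obtain it as a Walrasian equilibrium allocation of $\E^\diamond$, citing the Bewley-type existence argument that the paper carries out in Theorem \ref{thm6} and then using $\A_\mathrm{WE}\subset\A_\mathrm{C}$. But that equilibrium-existence argument requires \emph{monotonicity} of the extended preferences (Bewley's theorem rests on a monotonicity/desirability assumption), and this is exactly why the paper's Theorem \ref{thm6} strengthens the hypothesis to a ``continuous convex \emph{monotone} representation.'' The theorem you are proving assumes only a continuous convex representation: a quasiconcave transformation $\varphi_i\circ m_i$ of a vector measure need not be monotone (Axiom \ref{ax3} is not imposed here), so you cannot invoke Theorem \ref{thm6} or \cite{be72} to conclude $\A_\mathrm{WE}\ne\emptyset$, and the chain $\A_\mathrm{WE}\subset\A_\mathrm{C}$ then gives you nothing. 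As written, your argument proves the theorem only under the stronger hypotheses of Theorem \ref{thm6}.

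The paper circumvents this by establishing nonemptiness of the core directly, without passing through Walrasian equilibrium: it defines the NTU market game $V(S)$ from the weakly$^*$ continuous quasiconcave utilities $\nu_i^\diamond(f)=\varphi_i^\diamond\left(\int f\,dm_i\right)$, checks that $V$ is comprehensive below, bounded above, closed (using weak$^*$ compactness of $\A$), and balanced (using convexity of $\A$ and quasiconcavity of $\nu_i^\diamond$), and applies Scarf's theorem to get $C(V)\ne\emptyset$; a core payoff vector yields a nonempty, weakly$^*$ compact set of core allocations, and a Krein--Milman extreme-point argument as in the proof of Theorem \ref{thm4} produces a core allocation consisting of characteristic functions, i.e., a core partition. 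To repair your proposal you should either replace the equilibrium-existence step by such a direct Scarf-type core-existence argument, or explicitly add monotonicity to the hypotheses---which would change the statement being proved.
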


\begin{proof}
It follows from Theorem \ref{thm4} that the economy $\E$ has its extended economy $\E^\diamond=\{ L^\infty(\Omega,\Sigma,\mu),\S,(\succsim_i^\diamond,\chi_{\Omega_i})_{i=1}^n \}$ with a quasiconcave integral transformation such that for every allocation $(f_1,\dots,f_n)$ in $\E^\diamond$, there exists a partition $(A_1,\dots,A_n)$ in $\E$ satisfying $f_i\sim_i^\diamond\chi_{A_i}$ for each $i\in I$. Since each $\succsim_i^\diamond$ is represented by the form \eqref{eq3}, where $m_i:\Sigma \to E_i$ is a $\mu$-continuous vector measure with values in a separable Banach space $E_i$ and $\varphi_i^\diamond:m_i(\S)\to \R$ is a continuous quasiconcave function, the utility function $\nu_i^\diamond:\S\to \R$ defined by $\nu_i^\diamond(f):=\varphi_i^\diamond(\int fdm_i)$ is weakly$^*\!$ continuous and quasiconcave on $\S$. 

The market game $V:\I\to 2^{\R^n}$ with nontransferrable utility (NTU) for the extended economy $\E^\diamond$ is a set-valued mapping defined by:
\[
V(S)=\left\{ (x_1,\dots,x_n)\in \R^n \left| 
\begin{array}{ll}
\exists\,\text{$S$-\hspace{0pt}allocation $(f_1,\dots,f_n)\in \A$:} \\
\text{$x_i\le \nu_i^\diamond(f_i)$ $\forall i\in S$} 
\end{array}
\right.\right\}.
\]
By construction, $V(S)$ is the subset of the utility possibility set of the individuals in which payoff vectors are attainable via some coalition $S\in \I$. The \textit{core} $C(V)$ of the NTU game $V$ is given by: 
\[
C(V)=\left\{ (x_1,\dots,x_n)\in V(I) \mid \not\exists (S,y)\in \I\times V(S): x_i<y_i\ \forall i\in S \right \}.
\]
By the celebrated theorem of \citet{sc67}, $C(V)$ is nonempty if $V$ is comprehensive below and balanced, $V(S)$ is closed and bounded from above for every $S\in \I$, and $x=(x_1,\dots,x_n)\in \R^n$, $y=(y_1,\dots,y_n)\in V(S)$ and $x_i=y_i$ for each $i\in S$ imply $x\in V(S)$. We show that $V$ satisfies these conditions. 

It is easy to see that each $V(S)$ is comprehensive from below, i.e., $x=(x_1,\dots,x_n)\in \R^n$, $y=(y_1,\dots,y_n)\in V(S)$ and $x_i\le y_i$ for each $i\in I$ imply $x\in V(S)$. Since each $\nu_i^\diamond$ is weakly$^*\!$ continuous, and hence, bounded on the weakly$^*\!$ compact set $\S$, for each $S\in \I$ there exists $M_S\in \R$ such that $x_i\le M_S$ for every $x\in V(S)$ and $i\in S$.  

We shall show that $V$ is a balanced game. To this end, let $\B$ be a balanced family of $\I$ with balanced weights $\{\lambda^S\ge 0 \mid S\in \B \}$ and let $\B_i=\{ S\in \B \mid i\in S \}$. We then have $\sum_{S\in \B_i}\lambda^S=1$ for each $i\in I$. Define:
\[
\chi_i^S=
\begin{cases}
1 & \text{if $S\in \B_i$}, \\
0 & \text{otherwise}
\end{cases}
\quad\text{and} \quad t^S=\frac{1}{n}\sum_{i\in I}\lambda^S\chi_i^S.
\]
Then, we have:
$$
\sum_{S\in \B}t^S=\frac{1}{n}\sum_{S\in \B}\sum_{i\in I}\lambda^S\chi_i^S=\frac{1}{n}\sum_{i\in I}\sum_{S\in \B_i}\lambda^S=1.
$$
Choose any $x=(x_1,\dots,x_n)\in \bigcap_{S\in \B}V(S)$. Then, for every $S\in \B$, there exists an $S$-\hspace{0pt}allocation $(f_1^S,\dots,f_n^S)$ such that $x_i\le \nu_i^\diamond(f_i^S)$ for each $i\in S$. Let $f_i=\sum_{S\in \B}t^Sf_i^S$ for each $i\in I$. Then, $(f_1,\dots,f_n)$ is an allocation because $\A$ is convex. Since $x_i\le \nu_i^\diamond(f_i)$ for each $i\in I$ by the quasiconcavity of $\nu_i^\diamond$, we have $x\in V(I)$. Therefore, $\bigcap_{S\in \B}V(S)\subset V(I)$, and consequently, $V$ is balanced. 

We finally show that $V(S)$ is closed for every $S\in \B$. Let $\{ x^k \}_{k\in \N}$ be a sequence in $V(S)$ converging to $x\in \R^n$. Then, there exists an allocation $(f_1^k,\dots,f_n^k)$ such that $x_i^k\le \nu_i^\diamond(f_i^k)$ for each $i\in S$ and $k\in \N$. Since $\A$ is weakly$^*\!$ compact, the sequence $\{ (f_1^k,\dots,f_n^k) \}_{k\in \N}$ contains a subsequence that is weakly$^*\!$ convergent to $(f_1,\dots,f_n)\in \A$. Then we have $x_i\le \nu_i^\diamond(f_i)$ for each $i\in S$ by the weak$^*\!$ continuity of $\nu_i^\diamond$. It is easy to verify that $(f_1,\dots,f_n)$ is an $S$-\hspace{0pt}allocation. Thus, we obtain $x\in V(S)$, and hence, $V(S)$ is closed. 

Since $C(V)$ is nonempty, one can choose an element $(x_1,\dots,x_n)$ in $C(V)$. Then there exists an $I$-\hspace{0pt}allocation $(f_1,\dots,f_n)$ such that $x_i\le \nu_i^\diamond(f_i)$ for each $i\in I$. Suppose that $(f_1,\dots,f_n)$ is not a core allocation in $\E^\diamond$. Then there exists an $S$-\hspace{0pt}allocation $(g_1,\dots,g_n)$ such that $\nu_i^\diamond(f_i)<\nu_i^\diamond(g_i)$ for each $i\in S$. Then we have $(\nu_1^\diamond(g_1),\dots,\nu_n^\diamond(g_n))\in V(S)$ and $x_i<\nu_i^\diamond(g_i)$ for each $i\in S$, which contradicts the fact that $(x_1,\dots,x_n)$ is in $C(V)$. Let $(x_1,\dots,x_n)$ be in $C(V)$ and define the set $\A(x_1,\dots,x_n)$ by:
$$
\A(x_1,\dots,x_n)=\left\{ (f_1,\dots,f_n)\in \A\mid x_i\le \nu_i^\diamond(f_i) \ \forall i\in I \right\}. 
$$
It follows from the above argument that $\A(x_1,\dots,x_n)$ is a nonempty subset of $\A_\mathrm{C}$. Since $\A(x_1,\dots,x_n)$ is nonempty and weakly$^*\!$ compact in $[L^\infty(\Omega,\Sigma,\mu)]^n$, according to the Krein--\hspace{0pt}Milman theorem, $\A(x_1,\dots,x_n)$ has an extreme point $(g_1,\dots,g_n)$. Precisely in the same way with the proof of Theorem \ref{thm4}, we can show that each $g_i$ is a characteristic function. Therefore, there exists a partition $(A_1,\dots,A_n)$ of $\Omega$ such that $(\chi_{A_1},\dots,\chi_{A_n})\in  \A(x_1,\dots,x_n)$. This means that $(A_1,\dots,A_n)$ is a core partition in $\E$. 
\end{proof}

For the existence of Walrasian equilibrium partitions with a positive price, the monotonicity of preferences of each individual are imposed further. 

\begin{thm}
\label{thm6}
If $(\Omega,\Sigma,\mu)$ is saturated, then for every economy $\E=\linebreak\{ (\Omega,\Sigma,\mu),(\succsim_i,\Omega_i)_{i=1}^n \}$ for which each $\succsim_i$ admits a continuous convex monotone representation in a separable Banach space, there exists a Walrasian equilibrium partition in $\E$ with a positive price. 
\end{thm}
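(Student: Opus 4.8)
The plan is to pass to the extended economy $\E^{\diamond}$, establish a Walrasian equilibrium there whose price is in fact a $\mu$-continuous measure, and then purify the equilibrium allocation into a partition by the Lyapunov convexity theorem. By Theorem~\ref{thm1} and Proposition~\ref{lyap}, each $\succsim_i$ is represented on $\Sigma$ by $\varphi_i\circ m_i$, where $m_i\colon\Sigma\to E_i$ is a $\mu$-continuous vector measure into a separable Banach space with $m_i(\Sigma)=m_i(\S)$ weakly compact and convex, and $\varphi_i\colon m_i(\Sigma)\to\R$ is continuous and quasiconcave with $\varphi_i(0)=0$; the monotonicity axiom renders $\varphi_i\circ m_i$, hence the extension $\succsim_i^{\diamond}$ defined by \eqref{eq3}, monotone. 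Thus $\E$ admits an extended economy $\E^{\diamond}$ with a quasiconcave integral transformation in which each $\nu_i^{\diamond}(f)=\varphi_i^{\diamond}(\int f\,dm_i)$ is weak$^*$ continuous, quasiconcave, and monotone on the weak$^*$ compact convex consumption set $\S=[0,\chi_\Omega]$.

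Next I would prove that $\E^{\diamond}$ has a Walrasian equilibrium $(p,(f_1,\dots,f_n))$. Since every consumption set equals the same weak$^*$ compact convex set $\S$ and every $\nu_i^{\diamond}$ is weak$^*$ continuous and quasiconcave, the demand sets $D_i(p)=\{f\in B_i^{\diamond}(p,\chi_{\Omega_i})\mid \nu_i^{\diamond}(f)\ge\nu_i^{\diamond}(g)\ \forall g\in B_i^{\diamond}(p,\chi_{\Omega_i})\}$ are nonempty, convex, and upper hemicontinuous in $p$; a standard fixed point argument (in the spirit of Debreu's social equilibrium theorem) over the weak$^*$ compact price set $\{p\in\mathit{ba}(\Sigma,\mu)\mid p\ge 0,\ p(\Omega)=1\}$, using $0\in\S$ as a cheaper point and $\sum_i\langle p,\chi_{\Omega_i}\rangle=p(\Omega)>0$ together with an irreducibility argument to pass from a quasi-equilibrium to an equilibrium, yields such a pair. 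Monotonicity then forces $p>0$ almost everywhere, Walras' law to bind for every $i$, and $\sum_i f_i=\chi_\Omega$; in particular $(f_1,\dots,f_n)$ is Pareto optimal in $\E^{\diamond}$.

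The decisive step is to arrange that $p$ be countably additive, i.e.\ a $\mu$-continuous finite measure on $\Sigma$. Weak$^*$ continuity of $\nu_i^{\diamond}$ makes each upper contour set $\{g\in\S\mid\nu_i^{\diamond}(g)\ge\nu_i^{\diamond}(f_i)\}$ weak$^*$ closed, hence weak$^*$ compact, and at equilibrium $f_i$ minimizes $\langle p,\cdot\rangle$ over it, so $p$ lies in its normal cone at $f_i$. Writing this set as $T_{m_i}^{-1}(\hat V_i)\cap\S$ with $\hat V_i\subset m_i(\S)$ the corresponding upper contour set of $\varphi_i^{\diamond}$, its normal cone at $f_i$ splits into the image under $T_{m_i}^{*}$ of a normal cone in $E_i^{*}$ --- a set of countably additive measures, since $m_i$ is a $\mu$-continuous vector measure --- plus the normal cone to $\S$ at $f_i$, whose elements are supported on $\{f_i=0\}\cup\{f_i=1\}$ with fixed signs. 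Applying the Yosida--Hewitt decomposition, the purely finitely additive part of $p$ must then come from the $\S$-normal-cone summand, be nonnegative (as $p\ge 0$), and be supported on $\{f_i=0\}$ for every $i$; since $\bigcap_i\{f_i=0\}$ is null by $\sum_i f_i=\chi_\Omega$, this purely finitely additive part vanishes and $p\in L^{1}(\Omega,\Sigma,\mu)_{+}$.

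Finally I would purify. With $p$ countably additive, $m:=(\mu,m_1,\dots,m_n,p)\colon\Sigma\to\R\times\prod_i E_i\times\R$ is a $\mu$-continuous vector measure into a separable Banach space, and the set
\[
\A^{*}:=\bigl\{(g_1,\dots,g_n)\in\A \bigm| \nu_i^{\diamond}(g_i)=\nu_i^{\diamond}(f_i)\ \text{and}\ \langle p,g_i\rangle=\langle p,\chi_{\Omega_i}\rangle\ \ \forall i\in I\bigr\}
\]
contains $(f_1,\dots,f_n)$ (since Walras' law binds) and is weak$^*$ compact in $[L^{\infty}(\Omega,\Sigma,\mu)]^{n}$, each defining equality being weak$^*$ closed because $p\in L^{1}$ and each $\nu_i^{\diamond}$ is weak$^*$ continuous. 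By the Krein--Milman theorem $\A^{*}$ has an extreme point $(g_1,\dots,g_n)$, and running exactly the perturbation argument of Theorem~\ref{thm4} --- with $h=\varepsilon(\chi_A-2\chi_B)$ and $B\subset A$ chosen via Proposition~\ref{lyap} so that $\int h\,dm=0$, which now also preserves each $\langle p,g_i\rangle$ --- forces every $g_i$ to be a characteristic function $\chi_{A_i}$. Then $(A_1,\dots,A_n)$ is a partition of $\Omega$ with $\nu_i^{\diamond}(\chi_{A_i})=\nu_i^{\diamond}(f_i)=\max_{B_i^{\diamond}(p,\chi_{\Omega_i})}\nu_i^{\diamond}$ and $\langle p,\chi_{A_i}\rangle=\langle p,\chi_{\Omega_i}\rangle$, so each $\chi_{A_i}$ maximizes $\succsim_i^{\diamond}$ over $B_i^{\diamond}(p,\chi_{\Omega_i})$; hence $(p,(A_1,\dots,A_n))$ is a Walrasian equilibrium partition of $\E$ with a positive price. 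The hardest step is the third one --- extracting a countably additive equilibrium price, which is precisely what makes $p$ admissible as a coordinate of the vector measure driving the purification; the leverage is the integral-transformation form of the utilities together with monotonicity and the feasibility identity $\sum_i f_i=\chi_\Omega$.
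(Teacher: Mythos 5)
Your overall architecture --- pass to the extended economy $\E^\diamond$, obtain a Walrasian equilibrium there, then purify the equilibrium allocation into a partition via Krein--Milman and the Lyapunov perturbation of Theorem \ref{thm4} --- is exactly the paper's. But two of your steps are not actually carried out. The existence of an equilibrium in $\E^\diamond$ is only sketched: the paper simply verifies the hypotheses of \cite{be72} and invokes Bewley's existence theorem for commodity space $L^\infty(\Omega,\Sigma,\mu)$ with prices in $\mathit{ba}(\Sigma,\mu)$, whereas your ``cheaper point plus irreducibility'' passage from quasi-equilibrium to equilibrium is unverified --- nothing in the hypotheses rules out $p(\Omega_i)=0$ for some $i$, and irreducibility of this economy is not an assumption you may appeal to, so that step needs either the full argument or the citation.

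The genuine gap is your third step, the extraction of a countably additive price. The claim that the normal cone of $\{g\in\S\mid \nu_i^\diamond(g)\ge\nu_i^\diamond(f_i)\}=T_{m_i}^{-1}(\hat V_i)\cap\S$ at $f_i$ splits as $T_{m_i}^{*}\bigl(N_{\hat V_i}(\smallint f_i\,dm_i)\bigr)+N_{\S}(f_i)$ is precisely the kind of intersection/preimage calculus that requires constraint qualifications: $T_{m_i}$ is not surjective, $T_{m_i}^{*}$ need not have closed range, and $T_{m_i}^{-1}(\hat V_i)$ need not meet the norm interior of $\S$ (for instance when $f_i$ maximizes $\varphi_i^\diamond$ on $m_i(\S)$); without such a qualification only the inclusion $T_{m_i}^{*}(N_{\hat V_i})+N_{\S}(f_i)\subset N_{U_i}(f_i)$ holds, which is the wrong direction for your purpose. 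Moreover ``supported on $\{f_i=0\}$'' is not meaningful for purely finitely additive measures (at best one gets mass on $\{f_i<\varepsilon\}$ for every $\varepsilon>0$), and the premise that $f_i$ minimizes expenditure over its upper contour set again needs positive income for individual $i$. Note that the paper does not take this route at all: it keeps $\hat p\in \mathit{ba}(\Sigma,\mu)$, purifies the set $\A_W(\hat p)$ of Walrasian allocations directly (so countable additivity is never used in the proof), and relegates $p\in L^1$ to a remark that requires additional Mackey-continuity hypotheses via Bewley's second theorem. Your underlying concern --- that the Lyapunov perturbation should not destroy budget feasibility, which is why you want $p$ as a coordinate of the purifying vector measure --- is a legitimate one that the paper treats only implicitly; but if you insist on a countably additive price you should obtain it by a Bewley-type exclusion argument (truncate $f_i$ on small-$\mu$-measure sets carrying the purely finitely additive part and exploit weak$^{*}$ continuity, monotonicity, and $\sum_i f_i=\chi_\Omega$), not by the unproved normal-cone decomposition; as written, the countable-additivity step, and hence your construction of the purifying measure $m=(\mu,m_1,\dots,m_n,p)$, does not stand.
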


\begin{proof}
By Proposition \ref{lyap} and Theorem \ref{thm1}, the economy $\E$ has its extended economy $\E^\diamond=\{ L^\infty(\Omega,\Sigma,\mu),\S,{(\succsim_i^\diamond,\chi_{\Omega_i})}_{i=1}^n \}$ with a quasiconcave integral transformation such that each $\succsim_i^\diamond$ is represented by the form \eqref{eq3}, where $m_i:\Sigma \to E_i$ is a vector measure with values in a separable Banach space $E_i$ and $\varphi_i^\diamond:m_i(\S)\to \R$ is a continuous quasiconcave function. Since $\S$ is weakly$^*\!$ closed in $L^\infty(\Omega,\Sigma,\mu)$ and each utility function defined by $\nu_i^\diamond(f)=\varphi_i^\diamond(\int fdm_i)$ is weakly$^*\!$ continuous and quasiconcave on $\S$ satisfying \cite[Monotonicity Assumption]{be72}, and hence, all the assumptions of \cite[Theorem 1]{be72} are met for $\E^\diamond$. Then there exists a Walrasian equilibrium $(\hat{p},(\hat{f}_1,\dots,\hat{f}_n))$ with positive price $\hat{p}$ in $\E^\diamond$. Define the set of Walrasian allocations associated with the equilibrium price $\hat{p}$ by:   
$$
\A_\mathit{W}(\hat{p})=\left\{ (f_1,\dots,f_n)\in \A \left| \begin{array}{l}\text{$f_i$ is a maximal element for $\succsim_i$} \\ \text{on $B_i^\diamond(\hat{p},\chi_{\Omega_i})$ $\forall i\in I$} \end{array} \right.\right\}.
$$
Then $\A_\mathit{W}(\hat{p})$ is nonempty and weakly$^*\!$ compact in $[L^\infty(\Omega,\Sigma,\mu)]^n$. According to the Krein--\hspace{0pt}Milman theorem, $\A_\mathit{W}(\hat{p})$ has an extreme point $(g_1,\dots,g_n)$. Precisely in the same way as the proof of Theorem \ref{thm4}, we can show that each $g_i$ is a characteristic function. Therefore, there exists a partition $(\hat{A}_1,\dots,\hat{A}_n)$ of $\Omega$ such that $(\chi_{\hat{A}_1},\dots,\chi_{\hat{A}_n})\in  \A_\mathit{W}(\hat{p})$. This means that the price-\hspace{0pt}partition pair $(\hat{p},(\hat{A}_1,\dots,\hat{A}_n))$ is a Walrasian equilibrium with positive price $\hat{p}$ in $\E$. 
\end{proof}

Under the same assumption as Theorem \ref{thm6} the existence of Pareto optimal envy-free partitions is guaranteed. 

\begin{thm}
If $(\Omega,\Sigma,\mu)$ is saturated, then for every economy $\E=\linebreak\{ (\Omega,\Sigma,\mu),(\succsim_i,\Omega_i)_{i=1}^n \}$ for which each $\succsim_i$ admits a continuous convex monotone representation in a separable Banach space, there exists a Pareto optimal envy-free partition in $\E$. 
\end{thm}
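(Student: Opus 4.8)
The plan is to obtain a Pareto optimal envy-free partition as a partition representative of an \emph{equal-income} Walrasian equilibrium in the extended economy, combining the standard observation that a Walrasian equilibrium of the economy in which every initial endowment is replaced by the equal share $\tfrac1n\chi_\Omega$ is automatically envy-free and Pareto optimal with the extreme point reduction of the proof of Theorem \ref{thm4}.

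First I would set up the extended economy exactly as in the proofs of Theorems \ref{thm4} and \ref{thm6}: by Proposition \ref{lyap} and Theorem \ref{thm1}, each $\succsim_i^\diamond$ is represented by a weakly$^*\!$ continuous, quasiconcave and monotone utility $\nu_i^\diamond(f)=\varphi_i^\diamond\!\left(\int f\,dm_i\right)$ with $m_i:\Sigma\to E_i$ a $\mu$-continuous vector measure into a separable Banach space. I would then replace every initial endowment by the equal share $\tfrac1n\chi_\Omega\in\S$, so that the aggregate endowment is still $\chi_\Omega$, and apply \cite[Theorem 1]{be72} to this modified extended economy in exactly the way done in the proof of Theorem \ref{thm6}, obtaining a Walrasian equilibrium $(\hat p,(\hat f_1,\dots,\hat f_n))$ with a positive price $\hat p$. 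A key point is that, since each $\nu_i^\diamond$ is weakly$^*\!$ and hence Mackey continuous, $\hat p$ may be taken in $L^1(\Omega,\Sigma,\mu)$; that is, $X\mapsto\int_X\hat p\,d\mu$ is a $\mu$-continuous, countably additive real measure, and $\langle\hat p,\chi_\Omega\rangle>0$.

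Next I would record the two welfare properties, valid for \emph{any} allocation $(f_1,\dots,f_n)\in\A$ in which each $f_i$ is a maximal element for $\succsim_i^\diamond$ on the common budget set $B^\diamond(\hat p)=\{f\in\S\mid\langle\hat p,f\rangle\le\tfrac1n\langle\hat p,\chi_\Omega\rangle\}$. Envy-freeness is immediate because $f_j\in B^\diamond(\hat p)$, so $\nu_i^\diamond(f_i)\ge\nu_i^\diamond(f_j)$ for all $i,j$; and Pareto optimality follows from the first welfare theorem, where monotonicity of $\nu_i^\diamond$ provides the local nonsatiation needed to bind every budget constraint (and rules out $f_i=\chi_\Omega$, since then some $f_j=0$ would not be maximal when $n\ge2$). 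Writing $M_i:=\max_{f\in B^\diamond(\hat p)}\nu_i^\diamond(f)$, which is attained because $\hat p\in L^1$ renders $B^\diamond(\hat p)$ weakly$^*\!$ compact and $\nu_i^\diamond$ is weakly$^*\!$ continuous, I would then consider
$$
\A_W(\hat p):=\big\{(f_1,\dots,f_n)\in\A\mid f_i\in B^\diamond(\hat p)\ \text{and}\ \nu_i^\diamond(f_i)\ge M_i\ \forall i\in I\big\},
$$
which is nonempty (it contains $(\hat f_1,\dots,\hat f_n)$), convex (the intersection of $\A$ with the product of the convex sets $B^\diamond(\hat p)\cap\{\nu_i^\diamond\ge M_i\}$, using quasiconcavity of $\nu_i^\diamond$) and weakly$^*\!$ compact; moreover, by the previous sentence every member of $\A_W(\hat p)$ is an envy-free Pareto optimal allocation of $\chi_\Omega$ in the modified extended economy.

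Finally I would take, by the Krein--Milman theorem, an extreme point $(g_1,\dots,g_n)$ of $\A_W(\hat p)$ and show, precisely as in the proof of Theorem \ref{thm4}, that each $g_i$ is a characteristic function: if not, choosing $\varepsilon>0$ and $A\in\Sigma$ with $\mu(A)>0$ such that $\varepsilon<g_1,g_2<1-\varepsilon$ on $A$, I would apply Proposition \ref{lyap} to the $\mu$-continuous vector measure $m:=(\mu,m_1,\dots,m_n,\hat p):\Sigma\to\R\times\prod_{i=1}^nE_i\times\R$ (reading $\hat p$ as the measure $X\mapsto\int_X\hat p\,d\mu$) to obtain $B\subset A$ with $m(B)=m(A)/2$; then $h:=\varepsilon(\chi_A-2\chi_B)\ne0$ satisfies $0\le g_1\pm h,g_2\pm h\le1$, $\int h\,dm_i=0$ for all $i$ and $\langle\hat p,h\rangle=0$, so both $(g_1+h,g_2-h,g_3,\dots,g_n)$ and $(g_1-h,g_2+h,g_3,\dots,g_n)$ lie in $\A_W(\hat p)$, contradicting extremality. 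Hence $(g_1,\dots,g_n)=(\chi_{\hat A_1},\dots,\chi_{\hat A_n})$ for a partition $(\hat A_1,\dots,\hat A_n)$ of $\Omega$; since this allocation is envy-free and Pareto optimal and $\succsim_i^\diamond$ extends $\succsim_i$, it follows that $\hat A_j\succsim_i\hat A_i$ for all $i,j$ and, using $\P\subset\A$, that no partition Pareto dominates $(\hat A_1,\dots,\hat A_n)$ in $\E$, i.e.\ $(\hat A_1,\dots,\hat A_n)\in\P_\mathrm{EF}\cap\P_\mathrm{PO}$. I expect the extreme point step to be the main obstacle, because the perturbation $h$ must simultaneously preserve every individual's utility level \emph{and} every individual's expenditure; this is exactly why it is essential that Mackey continuity of the preferences makes the Bewley equilibrium price countably additive, so that it can be folded into the vector measure before the Lyapunov theorem in the saturated space is invoked.
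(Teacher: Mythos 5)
Your overall strategy is viable and, in its final step, genuinely different from the paper's: the paper first gets the equal-income Bewley equilibrium $(\bar p,(\bar f_1,\dots,\bar f_n))$ in $\E^\diamond$ and then invokes Theorem \ref{thm4} to pick a partition merely \emph{indifferent} agent-by-agent to $(\bar f_1,\dots,\bar f_n)$, transferring Pareto optimality and envy-freeness by contradiction; you instead purify the equilibrium allocation itself, in the spirit of the proof of Theorem \ref{thm6}, so that the resulting characteristic functions are still optimal in the common budget set, which makes the envy-freeness of the partition immediate ($\chi_{\hat A_j}$ remains affordable, so $\chi_{\hat A_i}\succsim_i^\diamond\chi_{\hat A_j}$). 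That is an attractive feature of your route, since it keeps budget feasibility, and not just utility levels, under the extreme-point perturbation.

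However, there is a genuine gap: your entire construction hinges on the claim that the Bewley equilibrium price $\hat p$ ``may be taken in $L^1(\Omega,\Sigma,\mu)$'' because the utilities $\nu_i^\diamond$ are weak$^*\!$ (hence Mackey) continuous on $\S$. What \cite[Theorem 1]{be72}, the result actually used in the proof of Theorem \ref{thm6}, delivers is a positive price in $\mathit{ba}(\Sigma,\mu)$ only; countable additivity comes from Bewley's Theorem 2 via the Yosida--Hewitt decomposition, and, as the paper's remark following Theorem \ref{thm6} points out, that argument requires each $\succsim_i^\diamond$ to be monotone with a Mackey continuous monotone extension to the \emph{positive cone} of $L^\infty(\Omega,\Sigma,\mu)$ --- here $\varphi_i^\diamond$ is defined only on $m_i(\S)$ and no such extension has been produced. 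Without countable additivity and $\mu$-continuity of $\hat p$ you cannot fold it into the Lyapunov vector measure $m=(\mu,m_1,\dots,m_n,\hat p)$ (Proposition \ref{lyap} applies to countably additive $\mu$-continuous measures), nor is the budget set $B^\diamond(\hat p)$, and hence $\A_W(\hat p)$, weak$^*\!$ closed, so both the Krein--Milman step and the budget-preservation of the perturbation $h$ are unjustified as written. The gap is fixable --- either adapt Bewley's Theorem 2 to the consumption set $\S$ (truncations $g\chi_{\Omega\setminus A_k}$ stay in $\S$), or decompose $\hat p=p_c+p_f$, put only $p_c$ into the vector measure, and choose the perturbation set $A$ inside a set on which the purely finitely additive part $p_f$ vanishes --- but some such argument must be supplied; alternatively one can avoid the issue altogether by following the paper and fixing only the utility levels, as in Theorem \ref{thm4}, at the cost of a less direct envy-freeness transfer.
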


\begin{proof}
Let $\E^\diamond=\{ L^\infty(\Omega,\Sigma,\mu),\S,{(\succsim_i^\diamond,\chi_{\Omega_i})}_{i=1}^n \}$ be an extended economy of $\E$ with a quasiconcave integral transformation such that for every allocation $(f_1,\dots,f_n)$ in $\E^\diamond$, there exists a partition $(A_1,\dots,A_n)$ in $\E$ satisfying $f_i\sim_i^\diamond\chi_{A_i}$ for each $i\in I$. Consider the extended economy $\overline{\E^\diamond}=\{ L^\infty(\Omega,\Sigma,\mu),\S,\linebreak {(\succsim_i^\diamond)}_{i=1}^n,n^{-1}\chi_\Omega \}$ in which each individual possesses the equal initial endowment $n^{-1}\chi_\Omega\in \S$. Then a Walrasian equilibrium $(\bar{p},(\bar{f}_1,\dots,\bar{f}_n))$ with positive price $\bar{p}$ in $\overline{\E^\diamond}$ exists as proved in Theorem \ref{thm6}. The Walrasian allocation $(\bar{f}_1,\dots,\bar{f}_n)$ is Pareto optimal in $\E^\diamond$ (because of the the first welfare theorem) and in view of the equal budget set among all individuals, it is evidently envy free in $\E^\diamond$. Take any partition $(\bar{A}_1,\dots,\bar{A}_n)$ in $\E$ satisfying $\bar{f}_i\sim_i^\diamond\chi_{\bar{A}_i}$ for each $i\in I$. If $(\bar{A}_1,\dots,\bar{A}_n)$ is not Pareto optimal in $\E$, then there exists another partition $(A_1,\dots,A_n)$ such that $A_i\succsim_i \bar{A}_i$ for each $i\in I$ and $A_j\succ_j \bar{A}_j$ for some $j\in I$. This means that $\chi_{\bar{A}_i}\succsim_i^\diamond \bar{f}_i$ or each $i\in I$ and $\chi_{\bar{A}_j}\succsim_j^\diamond \bar{f}_j$ for $j$, a contradiction to the Pareto optimality of $(\bar{f}_1,\dots,\bar{f}_n)$ in $\E^\diamond$. Hence, $(\bar{A}_1,\dots,\bar{A}_n)$ is a Pareto optimal partition in $\E$. If $(\bar{A}_1,\dots,\bar{A}_n)$ is not envy free in $\E$, then $\bar{A}_j\,\succ_i\,\bar{A}_i$ for some $i\ne j$. This implies that $\bar{f}_j\,\succ_i^\diamond\,\bar{f}_i$, a contradiction to the fact that $(\bar{f}_1,\dots,\bar{f}_n)$ is envy free in $\E^\diamond$. 
\end{proof}

\begin{rem}
To obtain a countably additive equilibrium price in Theorem \ref{thm6}, the same argument as in \cite{be72}, which uses the Yosida--Hewitt decomposition in $\mathit{ba}(\Sigma,\mu)$, is valid. In particular, if the extended economy $\E^\diamond$ is such that each $\succsim_i^\diamond$ is monotone and has a Mackey continuous extension to the positive cone of $L^\infty(\Omega,\Sigma,\mu)$ preserving monotonicity, then \cite[Theorem 2]{be72} is applicable to the proof of Theorem \ref{thm6}, and thereby, $\hat{p}$ can be taken in $L^1(\Omega,\Sigma,\mu)$. 
\end{rem}

\begin{rem}
For an economy $\E$ in which each $\succsim_i$ admits a continuous representation in $\R^{l_i}$, the existence of Walrasian equilibria (see \cite{hu11}), the nonemptiness of the core (see \cite{hu08,sa06}), the nonemptiness of the fuzzy core and the existence of supporting prices (see \cite{hs12}), and the existence of Pareto optimal $\alpha$-fair partitions (see \cite{sa11}) were established under the convexity hypothesis on $\succsim_i$.  Under the alternative continuity hypothesis on $\succsim_i$, the existence of Pareto optimal envy-free partitions was given in \cite{hs13} without any convexity hypothesis on $\succsim_i$. For other solution concepts regarding the fair division problems, see \cite{sa08,sa11,sv10b,sv11}. 
\end{rem}

\section{Concluding Remarks}
We conclude this paper by raising an open problem. We leave aside the existence of supporting prices for Pareto optimal partitions under the convexity assumption. The existence of supporting prices in $L^1(\Omega,\Sigma,\mu)$ was obtained in \cite{hs12} with a finite-dimensional vector representation of preference relations. The problem is a standard application of the separation theorem in $L^\infty(\Omega,\Sigma,\mu)$. It should be noted that thanks to Theorem \ref{thm4}, supporting prices in an extended economy are automatically the ones in the original economy. As demonstrated in Theorem \ref{thm5}, however, for the existence of Pareto optimal partitions, the convexity assumption is unnecessary. Thus, the challenging problem is instead to demonstrate the second fundamental theorem of welfare economics for economies without any convexity assumption. For nonconvex production economies with an infinite-dimensional commodity space, \cite{kv88} derived a price system in the Clarke normal cone that is consistent with a Pareto optimal allocation, and their result covers the commodity space of $L^\infty(\Omega,\Sigma,\mu)$. It is well-known that in the presence of nonconvexity Clarke normal cones are strictly larger than \citeauthor{mo00} normal cones (see \cite{mo06a}), so that price systems in the latter are called for to derive sharper necessary conditions for Pareto optimality. It is \cite{kh91} who first introduced \citeauthor{mo00} normal cones to obtain the second welfare theorem into finite-dimensional nonconvex production economies. For the extension of \cite{kh91} to Banach or Asplund spaces of commodities, see \cite{bm10,mm01,mo00,mo05,mo06b}. The second welfare theorem without convexity assumptions specific to optimal partitions based on \citeauthor{mo00} normal cones is still unknown.

\end{document}